\newtheorem{theorem}{Theorem}
\newtheorem{definition}[theorem]{Definition}
\newtheorem{lemma}[theorem]{Lemma}
\newtheorem{proposition}[theorem]{Proposition}
\newtheorem{remark}[theorem]{Remark}
\newenvironment{proof}[1][Proof]{\noindent\textbf{#1.} }{\ \rule{0.5em}{0.5em}}
\providecommand{\abs}[1]{\left\lvert#1\right\rvert}
\providecommand{\pr}[1]{\left(#1\right)} 
\providecommand{\pp}[1]{\left[#1\right]} 
\providecommand{\set}[1]{\left\lbrace#1\right\rbrace} 
\title{Stochastic porous media equation with Robin boundary conditions, gravity-driven infiltration and multiplicative noise}
\author{
 {Ioana CIOTIR$^{1}$, Dan GOREAC$^{2,3,4}$, Juan LI$^{2,5}$, Antoine TONNOIR$^{1}$}\\
 {$^1$\small Normandie University, INSA de Rouen Normandie, LMI (EA 3226 – FR CNRS 3335), \\ \small 76000 Rouen, France}\\
{$^2$\small School of Mathematics and Statistics, Shandong University, Weihai,\\ \small Weihai 264209, P.~R.~China.}\\
{$^3$\small \'{E}cole d'actuariat, Universit\'{e} Laval, QC, Qu\'{e}bec, Canada.}\\
{$^4$\small LAMA, Univ Gustave Eiffel, UPEM, Univ Paris Est Creteil, CNRS,\\ \small F-77447 Marne-la-Vall\'{e}e, France.}\\
{$^5$\small Research Center for Mathematics and Interdisciplinary Sciences, Shandong University, \\ \small Qingdao 266237, P. R. China.}\\
{\small \it E-mails: ioana.ciotir@insa-rouen.fr,\,\ dan.goreac@univ-eiffel.fr,\\ \small \it juanli@sdu.edu.cn,\,\ antoine.tonnoir@insa-rouen.fr }
}
\begin{document}
\maketitle

\begin{abstract}
We aim at studying a novel mathematical model associated to a physical phenomenon of infiltration in an homogeneous porous medium. The particularities of our system are connected to the presence of a gravitational acceleration term proportional to the level of saturation, and of a Brownian multiplicative perturbation. Furthermore, the boundary conditions intervene in a Robin manner with the distinction of the behavior along the inflow and outflow respectively. We provide qualitative results of well-posedness, the investigation being conducted through a functional approach.

\end{abstract}

\noindent{\textbf{Keywords}: stochastic porous media equations,  Robin boundary conditions, maximal monotone operators, Yosida approximation.}\\

\noindent{MSC2020: 
35A01, 
60H15, 
76S99, 
47H05. 

}

\section{Introduction}

\bigskip

Our contribution primarily focuses on the qualitative study of a fairly involved mathematical model which describes the \emph{infiltration} of a liquid (water) in
an homogeneous porous medium (soil) taking into account the influence of the
\emph{gravitational acceleration} proportional to the level of saturation. In order to provide the readers with a keener insight, we choose to present the physical model prior to further considerations on the different aspects of the state-of-the-art.

\bigskip

\subsection{The physical model}

\bigskip
Along with delineating the physical model, we choose to present the units of measure for the different notions in paranthesis, such as to guarantee coherence.\\
First we consider an
incompressible fluid with constant density denoted by $\rho \left( \frac{kg}{m^{3}}%
\right) .$ Then, we consider a reference elementary volume $%
V_{r}\left( m^{3}\right) $ belonging to the flow domain, and we distinguish between $V_{v}$\ the
\emph{volume of voids} and $V_{w}$ the \emph{volume of water} in $V_{r}$. The flow is said
to be \emph{unsaturated} as long as all pores are not filled with water.

\noindent We further introduce the notion of soil \emph{moisture} as a scalar dimensionless notion by setting 
\begin{equation*}
\theta =\frac{V_{w}}{V_{r}}.
\end{equation*}

\noindent As in classical fluid dynamics theory, we combine the Darcy law and
the equation of continuity (or mass conservation) in order to get a
Richards' type equation which takes into consideration the influence of the
gravitational acceleration. The originality of the model consists in the
fact that we assumed that the influence of the gravity on the dynamic of the
flow is proportional with the moisture. In turn, this translates into further mathematical technical difficulties. The relevance of such models is illustrated shortly after this brief description of the model of interest.

In a classical manner, we invoke \emph{Darcy's law} written down as
\begin{equation*}
q=-\frac{k}{\mu }\left( \nabla p-\rho \theta gi_{3}\right),
\end{equation*}%
where

\begin{itemize}
\item $q$ is the \emph{fluid volumetric flux} ($m/s$),

\item $k$ is the \emph{isotropic permeability} ($m^{2}$),

\item $\mu $ is the coefficient of \emph{viscosity} ($kg/\left( m\cdot s\right) $),

\item $p$ is the \emph{pressure} ($kg/\left( m\cdot s^{2}\right) $),

\item $\rho $ is the \emph{density} ($kg/m^{3}$),

\item $\theta $ is the \emph{moisture} (a dimensionless physical constant)

\item $gi_{3}$ is the \emph{gravity} acceleration vector ($g$ being expressed in $m/s^{2}$).
\end{itemize}

On the other hand, the equation of mass conservation yields
\begin{equation*}
\frac{\partial \left( \rho \theta \right) }{\partial t}+\textit{div}\left(
\rho q\right) =f,
\end{equation*}%
where $\rho ,\theta $ and $q$ are as above and $f$ is a water \emph{source}.

\noindent By combining the two equations and keeping in mind that $\rho $\ is constant
since the fluid is incompressible, it follows that%
\begin{equation*}
\frac{\partial \theta }{\partial t}-\frac{k}{\mu }\textit{div}\left( \nabla
p-\rho \theta gi_{3}\right) =f.
\end{equation*}
The attentive reader will have noticed that we have used $f$ instead of a $\rho$-renormalized source. We have chosen to do so in order to keep notations to a manageable minimum.
The hydraulic theory ensures that the water capacity is nothing but the
derivative of the moisture with respect to the pressure, i.e.,%
\begin{equation*}
C\left( p\right) =\frac{d\theta }{dp}.
\end{equation*}%
As a consequence, one can assume that a primitive of $C$ denoted by $C^{\ast }$ is a
single-valued positive, twice differentiable, strongly monotonically
increasing and concave function such that $\theta =C^{\ast }\left( p\right) $. Since it is more convenient to work with the variable $\theta $, we
introduce the inverse $\left( C^{\ast }\right) ^{-1}$, in order to obtain an
equation of the form%
\begin{equation*}
\frac{\partial \theta }{\partial t}-\Delta \frac{k}{\mu }\left( C^{\ast
}\right) ^{-1}\left( \theta \right) +\frac{k\rho g}{\mu }\textit{div}\left(
\theta i_{3}\right) =f.
\end{equation*}

\noindent By denoting $\Psi \left( r\right) =\frac{k}{\mu }\left( C^{\ast }\right)
^{-1}\left( r\right) $ and $K=\dfrac{k\rho g}{\mu }$, one can rewrite the
previous equation in the slightly more general form%
\begin{equation}\label{eq0}
\frac{\partial \theta }{\partial t}-\Delta \Psi \left( \theta \right) +K%
\textit{div}\left( \theta i_{3}\right) =f.
\end{equation}

\bigskip 

\subsubsection{Numerical illustrations}

\bigskip 

Let us present some numerical illustrations of the impact of the gravity term in the model. In Figure \ref{fig1}, we consider the porous medium mathematically modeled through $\Psi(r) = r^3$, and we can observe that, for the same source on the top of the domain, the solution is quite different. The humidity is more attracted towards bottom with the gravity term. This type of result motivates the study of the model with the gravity term.

\begin{figure}[h]
	\begin{center}
		\begin{tikzpicture}
			\begin{scope}[shift={(0,0)},scale=1.0,transform shape]
				\node at (0,0) {\includegraphics[trim={20cm 0cm 20cm 0cm},clip, width=0.24\textwidth]{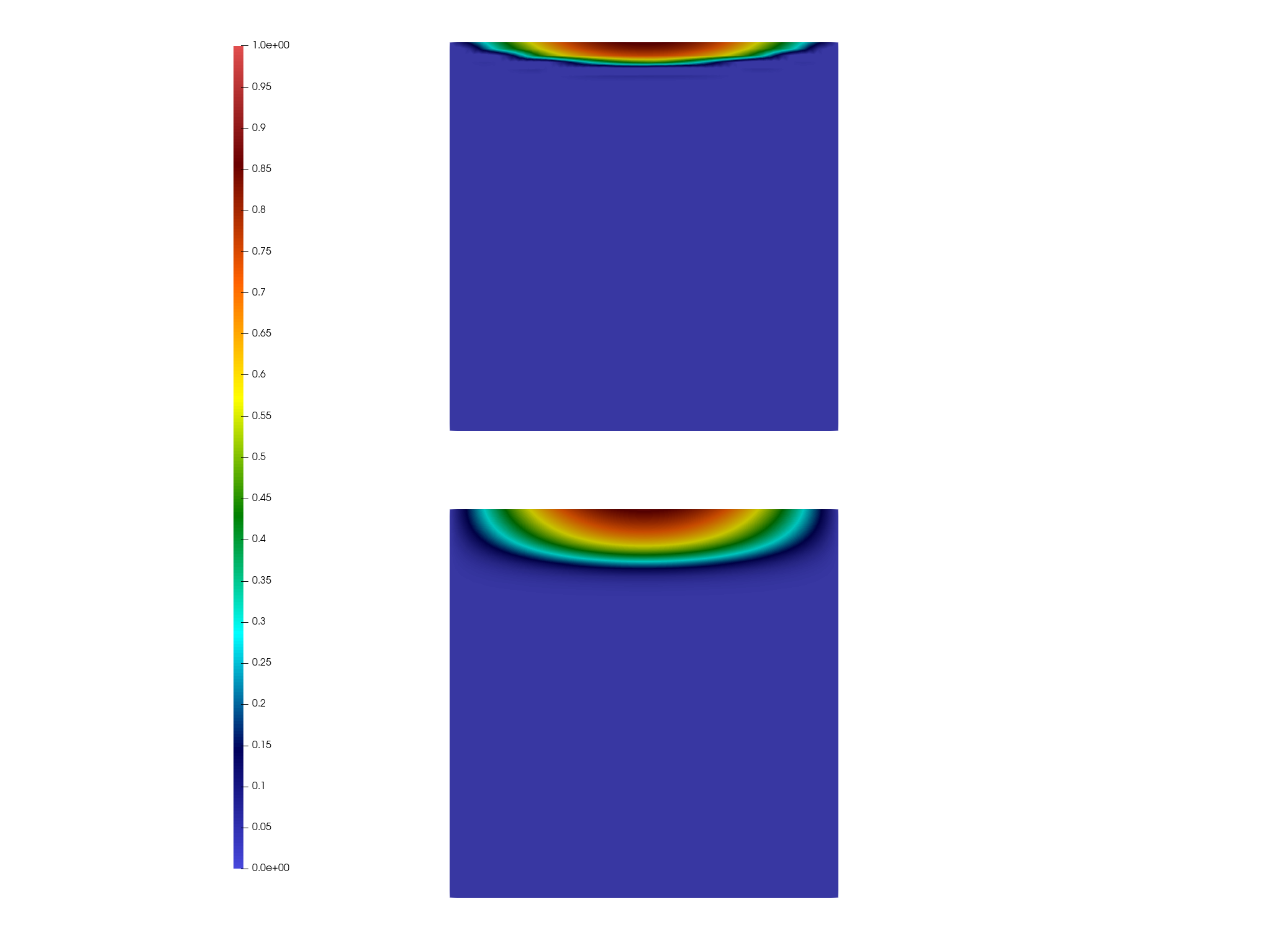}};
			\end{scope}
			\begin{scope}[shift={(3,0)},scale=1.0,transform shape]
				\node at (0,0) {\includegraphics[trim={20cm 0cm 20cm 0cm},clip, width=0.24\textwidth]{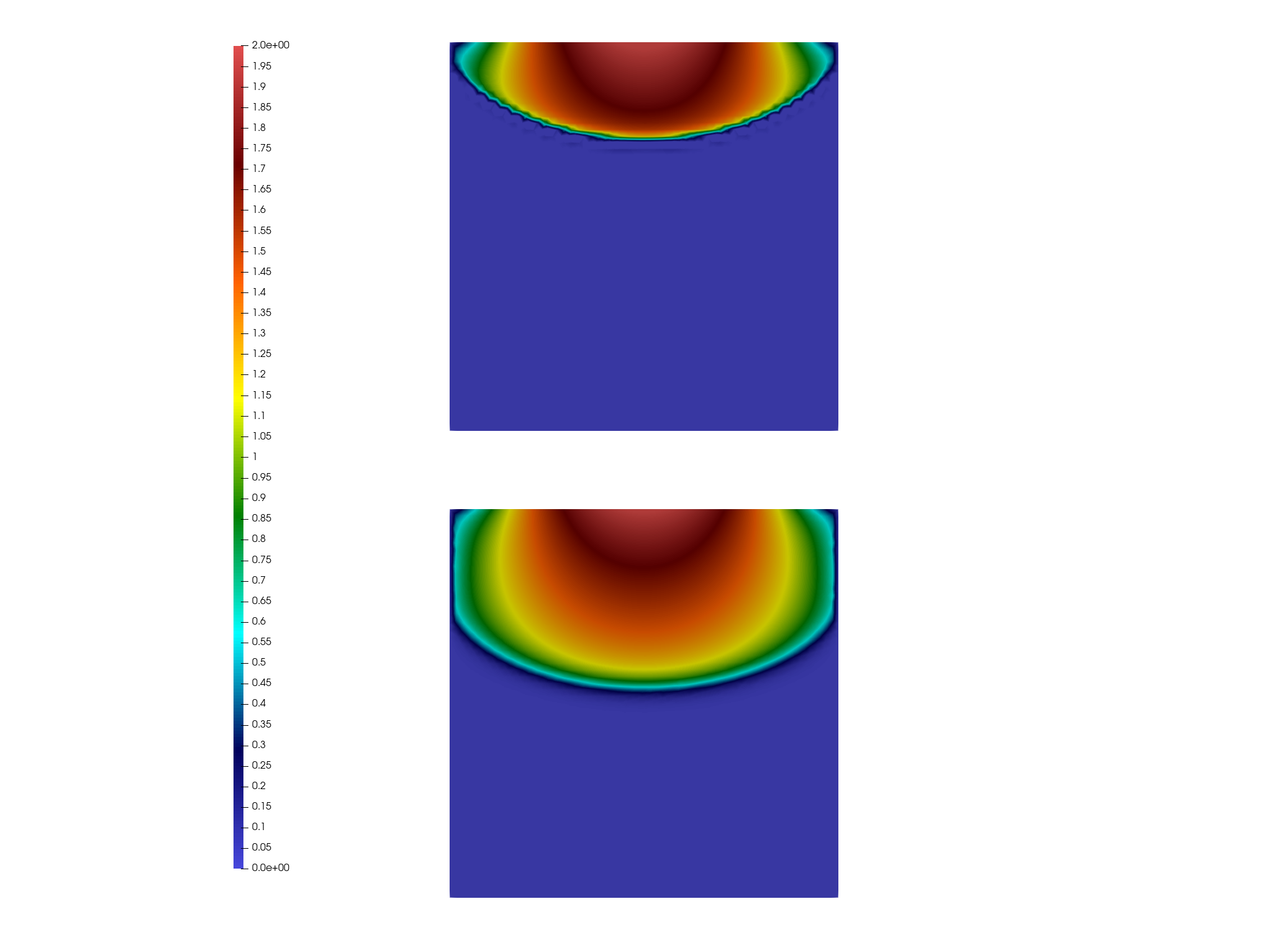}};
			\end{scope}
			\begin{scope}[shift={(6,0)},scale=1.0,transform shape]
				\node at (0,0) {\includegraphics[trim={20cm 0cm 20cm 0cm},clip, width=0.24\textwidth]{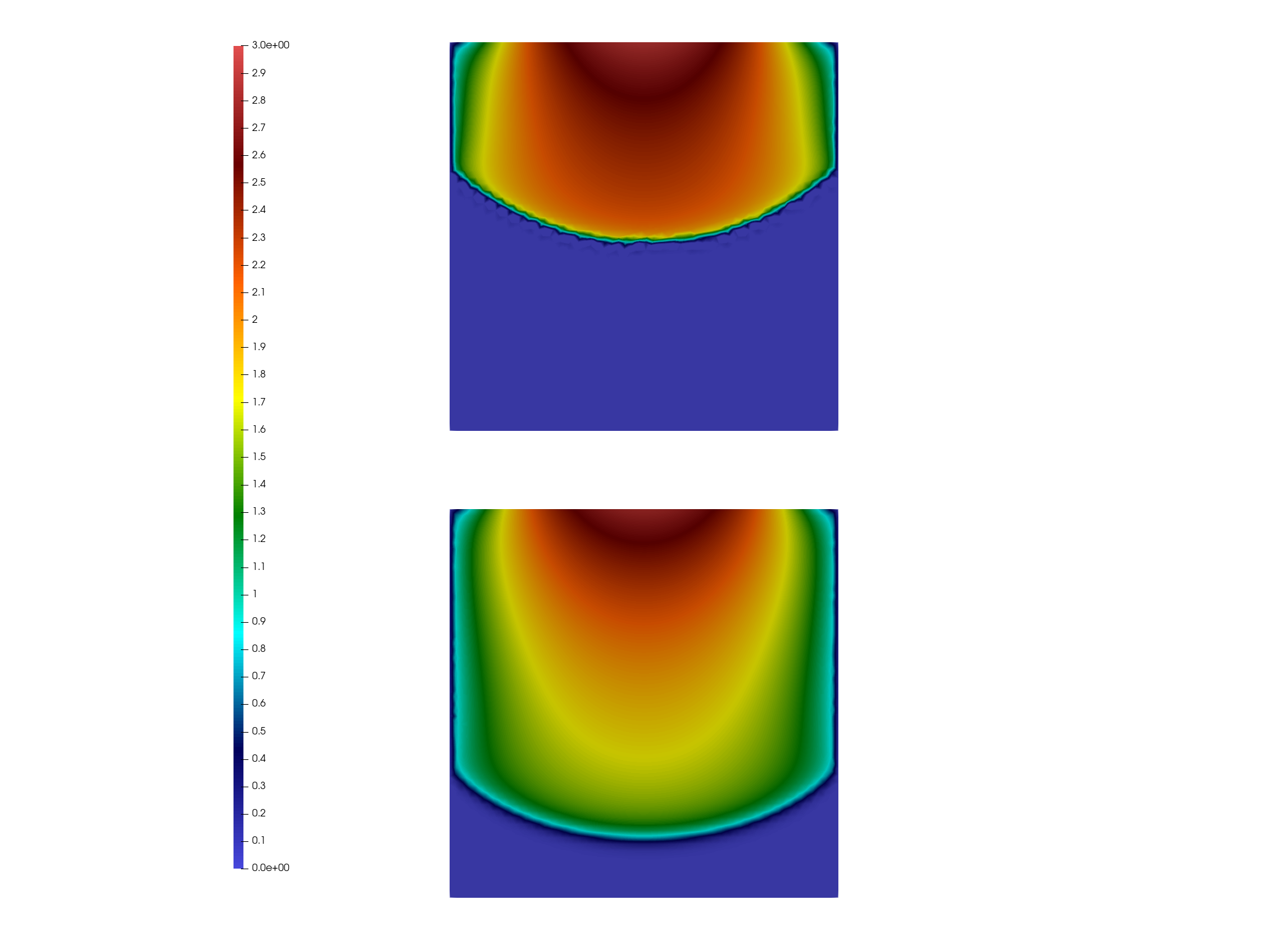}};
			\end{scope}
			\begin{scope}[shift={(9,0)},scale=1.0,transform shape]
				\node at (0,0) {\includegraphics[trim={20cm 0cm 20cm 0cm},clip, width=0.24\textwidth]{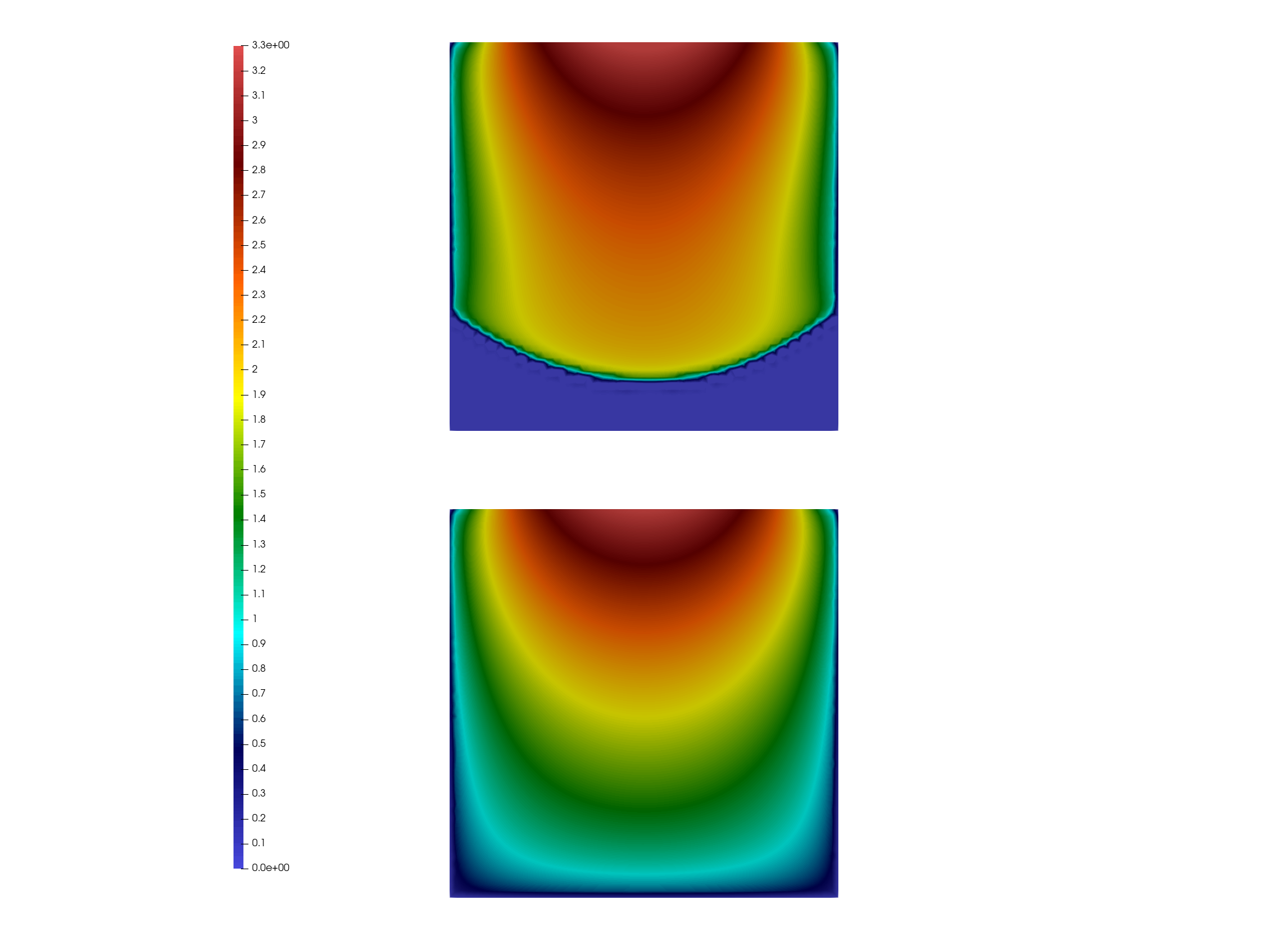}};
			\end{scope}
			\draw[->,thick] (-1,-3.2) -- (10,-3.2);
			\node[rotate=90] at (10.8,1.8) {$g\cdot i_3  = 0$};
			\node[rotate=90] at (10.8,-1.5) {$g\cdot i_3  < 0$};
			\node at (4.8,-3.7) {Time};
		\end{tikzpicture}
	\end{center}
	\caption{Illustration of the diffusion of the humidity inside the ground during time, from left to right. On the top, without gravity, on the bottom with the gravity term.}\label{fig1}
\end{figure}	
\bigskip \bigskip 

\subsection{A mathematical model}

\bigskip

We are now able to introduce the rigorous mathematical model in the spirit of \eqref{eq0}, in which the source $f$ is random and pertaining to a Brownian noise.

\noindent To this purpose, we consider a domain $\mathcal{O}$\ which is an open bounded subset of 
$\mathbb{R}^{3}$ with smooth boundary $\partial \mathcal{O}=\Gamma $. This boundary is splited into two parts denoted respectively by $\Gamma _{s}$ (on the surface) and $\Gamma _{u}$\ (underground), and such that 
$\Gamma =\Gamma _{s}\cup \overline{\Gamma _{u}}$ and $\Gamma _{s}\cap \Gamma
_{u}=\emptyset $. This domain extends from the soil surface $\Gamma _{s}$ to
an underground boundary $\Gamma _{u}$\ which is supposed to have a variable
permeability due to the presence of different types of soils. More
precisely, we are interested in the infiltration of the rain from surface boundary of the soil (source which is assumed to be known
as the average value of the previous precipitations) and underground
water, taking also into
account the influence of the gravity on the diffusion process and a linear
multiplicative stochastic noise coming from the errors of the measurement.

\noindent Our mathematical model is the following 
\begin{equation}
\left\{ 
\begin{array}{ll}
dX-\Delta \Psi \left( X\right) dt+K\dfrac{\partial X}{\partial x_{3}}%
dt=\sigma \left( X\right) dW_{t}, & \quad \textnormal{ on }\Omega \times \left( 0,T\right)
\times \mathcal{O}, \\ 
X\left( 0\right) =x, & \quad \textnormal{ on }\mathcal{O}, \\ 
\left( KXi_{3}-\nabla \Psi \left( X\right) \right) \nu =s, & \quad \textnormal{ on }\Omega
\times \left( 0,T\right) \times \Gamma _{s} , \\ 
\left( KXi_{3}-\nabla \Psi \left( X\right) \right) \nu -\alpha \Psi \left(
X\right) =u, & \quad \textnormal{ on }\Omega \times \left( 0,T\right) \times \Gamma _{u}.%
\end{array}%
\right.  \label{ecu}
\end{equation}

\noindent The solution $X\left( \omega ,t,\xi \right) $ of the equation \eqref{ecu}
describes the moisture on a path $\omega \in\Omega$. The solution is sought with respect to a complete
filtered probability space $\left( \Omega ,\mathcal{F},\left( \mathcal{F}%
_{t}\right) _{t\geq 0},\mathbb{P}\right) $, at each moment $t>0$ and at each point $%
\xi $ in space.\\

\noindent The nonlinear operator $\Psi $ contains the physical pieces of information related to
the hydraulic conductivity of the soil, while the term $K\dfrac{\partial }{%
\partial x_{3}}$ concerns the influence of the gravity on the diffusion
process. We denote by $\nu $ the outward normal to $\Gamma $ and by $i_{3}$ the unit
vector along the axis $Ox_{3}$ directed downwards.\\

\noindent The functions $s$ and $u$ are supposed to be known on $\Sigma _{s}=\left(
0,T\right) \times \Gamma _{s}$ and $\Sigma _{u}=\left( 0,T\right) \times
\Gamma _{u}$ respectively. Note that the equation on $\Gamma _{s}$\ expresses the continuity of the
normal component of the inflow flux and the equation on $\Gamma _{u}$
describes the behavior of the outflow. Finally, the function $\alpha $\ gives the variable permeability of the soil on the
underground boundary $\Gamma _{u}$.

\bigskip 

\subsection{State of the art and main contributions}

\bigskip 

\noindent Several different models connected to the porous media
diffusion are available. For general results concerning the existence and uniqueness of
the solution in the deterministic case for different models including slow, fast
and super fast diffusions in saturated or unsaturated flows, the reader is kindly referred to, e.g., \cite%
{Aaron}, \cite{Vasquez} and \cite{marinoschi}.

The stochastic porous media equation has been intensively studied during the recent
years. The reader is invited to consuls \cite{PM} for a collection of results
concerning the existence and uniqueness of the solution for the case with
Dirichlet homogeneous boundary conditions. For some critical cases, the papers \cite%
{eu-superfast}, \cite{eu-unsaturated}, \cite{IDR} are a good start. For some further qualitative properties of
the solutions, see \cite{finite-ext}, \cite{IDI1}, \cite{IDI2}, \cite{Jonas}, 
\cite{gess}, and for homogenization results see \cite{Trotter} and \cite%
{eu-conv}.

To the best of our knowledge, the only type of boundary conditions which was considered previously in the literature for the stochastic porous media equations are of Dirichlet homogeneous nature. On the other hand, the influence of the gravity on the diffusion process was never taken into consideration for the stochastic porous media equations so far, again, to our best knowledge.

In the present work, we develop a new and more realistic physical model that takes into account the influence of the gravity,  as well as the different flows on the boundary, through Robin type boundary conditions. 

Our model implies several technical difficulties at the mathematical level. Since the usual framework in $L^1$ or $H^{-1}$ are not appropriate for the Robin-type boundary conditions, one has to consider a different space whose norm accounts for the different components of the flow, as well as the source on the boundary. For this reason, several results, especially related to the construction and the properties of the stochastic noise, and also to the activity of the porous media operator needed to be adapted to the present case.
The estimates and the convergences of the proof are of a rather involved technical nature, because of the presence of several extra terms topping the more complicated spaces and norms previously mentioned.

\bigskip

The paper is organized as follows. In section \ref{Sec2} we explain the fundamental mathematical setting and the standing assumptions employed throughout the paper. The Gelfand triple with the appropriate norms is introduced in Subsection \ref{Subsec2.1}. The standing assumptions are gathered in Subsection \ref{Subsec2.2}; they concern the driving porous coefficient, the boundary inputs and the construction of the noise coefficient. The notion of abstract solution is specified in Subsection \ref{Subsec2.3}, while the remaining Subsection \ref{Subsec2.4} concerns the (quasi-) accretivity properties of the abstract nonlinear operator.
The main result makes the object of Section \ref{Sec3}, through Theorem \ref{ThMain}. Its proof is divided into several steps, and consists in two approximations, indexed by $\lambda, \varepsilon>0$. Showing that the approximating $\lambda, \varepsilon$-problem is well-posed in $L^2$ relies on Lemma \ref{lema2}, whose proof is relegated to the Appendix. Passing to the limit as $\varepsilon\rightarrow 0$ relies on the estimates in Lemma \ref{lema3}, whose proof is given at the end of Section \ref{Sec3}. The remaining arguments are gathered in the proof of the main result.

\section{Mathematical tools and assumptions}\label{Sec2}
\subsection{A Functional framework}\label{Subsec2.1}
In order to properly define the model, we need to introduce a
functional setting which is appropriated to our problem. In particular, since we intend to rely on the Gelfand-triple arguments, we introduce the norms on both the primal and the dual space in order to render the arguments compatible with the homogeneous Robin conditions.\\

\noindent To this purpose, we denote by $V$ the space $H^{1}\left( \mathcal{O}\right) $ endowed with the following norm%
\begin{equation*}
\left\vert x\right\vert _{V}=\left( \int_{\mathcal{O}}\left\vert \nabla
x\right\vert ^{2}d\xi +\int_{\Gamma _{u}}\alpha \left( \xi \right)
\left\vert x\right\vert ^{2}d\xi \right) ^{1/2},
\end{equation*}%
where $\alpha $\ is a strictly positive, continuous function, upper bounded and lower bounded away from $0$.
One can easily check that the norm above is equivalent with the classical
Hilbert norm of $H^{1}\left( \mathcal{O}\right) $ (see Theorem 2.7 in the Appendix of \cite{marinoschi}).

\noindent We denote by $V^{\prime }$ the dual of the space $V$ which is equipped with
the scalar product 
\begin{equation*}
\left\langle x,\overline{x}\right\rangle _{V^{\prime }}=x\left( \varphi
\right) ,\quad \forall x,\overline{x}\in V^{\prime },
\end{equation*}%
where $\varphi \in V$ satisfies the boundary value problem%
\begin{equation*}
\left\{ 
\begin{array}{ll}
-\Delta \varphi =\overline{x}, & \quad \text{on }\mathcal{O} \\ 
\dfrac{\partial \varphi }{\partial \nu }+\alpha \varphi =0, & \quad \text{on 
}\Gamma _{u} \\ 
\dfrac{\partial \varphi }{\partial \nu }=0, & \quad \text{on }\Gamma _{s}%
\text{.}%
\end{array}%
\right.
\end{equation*}

Here, $x\left( \varphi \right) $\ represents the value of the functional $x$
computed at $\varphi \in V$, or the pairing between $V$ and $V^{\prime }$ which
reduces to the scalar product in $L^{2}\left( \mathcal{O}\right) $ in the spirit of the Gelfand triple $V\subset L^{2}\left( \mathcal{O}\right) \subset V^{\prime }$.\\

We further consider the Laplace operator with Robin boundary conditions, and,
more precisely we are interested in the eigenvalues/ eigenfunctions problem 
\begin{equation}
\left\{ 
\begin{array}{ll}
-\Delta e_{j}=\lambda _{j}e_{j}, & \quad \text{in }\mathcal{O} \\ 
\dfrac{\partial e_{j}}{\partial \nu }+\alpha e_{j}=0, & \quad \text{on }%
\Gamma _{u} \\ 
\dfrac{\partial e_{j}}{\partial \nu }=0, & \quad \text{on }\Gamma _{s}\text{.%
}%
\end{array}%
\right.  \label{neumann}
\end{equation}

By the same argument as in the proof of Theorem 6.2 from \cite{limita} we
obtain the existence of a complete orthonormal system $\left\{ e_{k}\right\}
_{k}$ in $L^{2}\left( \mathcal{O}\right) $ of eigenfunctions of the Laplace
operator with Robin boundary conditions. We denote by $\left\{ \lambda _{k}\right\} _{k}$ the corresponding
sequence of real and increasing eigenvalues. We denote by $C$ a general constant that may
change from one line to another. The fundamental boundedness properties are gathered in the first part of our Appendix.

\subsection{Assumptions}\label{Subsec2.2}
Throughout the paper and unless stated otherwise, we will enforce the following.
\begin{itemize}
\item[1)] The function $\Psi :\mathbb{R\rightarrow R}$ is assumed to be a
maximal monotone operator in $\mathbb{R}$, continuous, differentiable, increasing, i.e. 
\begin{equation*}
\left( \Psi \left( r\right) -\Psi \left( s\right) \right) \left( r-s\right)
\geq C_{0}\left\vert r-s\right\vert ^{2},\quad \forall r,s\in \mathbb{R},
\end{equation*}%
for some $C_0\geq 0$. Furthermore, we ask that
\begin{equation*}
\left\{ 
\begin{array}{l}
\left\vert \Psi \left( r\right) \right\vert \leq C_{1}\left\vert
r\right\vert ^{m}+C_{2},\quad \forall r\in \mathbb{R}, \\ 
j\left( r\right) :=\int_{0}^{r}\Psi \left( s\right) ds\geq C_{3}\left\vert
r\right\vert ^{m+1}+C_{4}r^{2} -C_5,\quad \forall r\in \mathbb{R}, \\ 
\Psi \left( 0\right) =0%
\end{array}%
\right. 
\end{equation*}%
where $C_{i}>0,~i\in\set{1,\ldots,5}$ and $1\leq m$. When $K\neq 0$, we ask that $\Psi$ be strictly increasing, i.e., $C_0>0$.
We note that since $\Psi $\
is increasing, the mean value theorem implies that 
\begin{equation*}
r\Psi \left( r\right) \geq j\left( r\right) ,\quad \forall r\in \mathbb{R}.
\end{equation*}

\item[2)] The function $\alpha :\Gamma _{u}\rightarrow \left[ \alpha
_{m},\alpha _{M}\right] $ is positive and continuous such that \[0<\alpha
_{m}<\alpha \left( \xi \right) <\alpha _{M},\] for all $\xi\in\Gamma_u$.

\item[3)] The functions $u$ and $s$ belong to $L^{2}\left( \Sigma
_{u}\right) $ and $L^{2}\left( \Sigma _{s}\right) $ respectively, and are\
the traces in each $t$ of some functions $\widetilde{u}$ and $\widetilde{s}$
which belong to $H^{1}\left( \mathcal{O}\right) $.

\item[4)] The Wiener process $W$ is assumed to be cylindrical on $%
L^{2}\left( \mathcal{O}\right) $ and given by 
\begin{equation*}
W\left( t\right) =\sum_{j=1}^{\infty }\beta _{j}\left( t\right) e_{j},
\end{equation*}%
where $\left\{ \beta _{j}\right\} $ is a sequence of mutually independent
Brownian motions on the filtered probability $\left( \Omega ,\mathcal{F}%
,\left( \mathcal{F}_{t}\right) _{t~},\mathbb{P}\right) $ and $\left\{
e_{j}\right\} $ is the orthonormal basis in $L^{2}\left( \mathcal{O}\right) $%
\ defined by the Robin-Laplace operator.

\item[5)] In the noise we assume that $\Sigma $\ is a linear operator from $%
V^{\prime }$ to the space of Hilbert-Schmidt operators $L_{2}\left(
L^{2}\left( \mathcal{O}\right) ;V^{\prime }\right) $ of the form%
\begin{equation*}
\Sigma \left( x\right) h=\sum_{j=1}^{\infty }\mu _{j}\left( h,e_{j}\right)
_{2}xe_{j},\quad \forall x\in V^{\prime },~h\in L^{2}\left( \mathcal{O}%
\right) .
\end{equation*}%
In order to guarantee the Hilbert-Schmidt condition, i.e. 
$
\sum_{j=1}^{\infty }\left\vert \Sigma \left( x\right) e_{j}\right\vert
_{V^{\prime }}^{2}<\infty ,\quad \forall x\in V^{\prime }$,
we need to enforce the convergence of the serial%
\begin{equation*}
\sum_{j=1}^{\infty }\mu _{j}^{2}\left\vert xe_{j}\right\vert _{V^{\prime
}}^{2}\leq C\sum_{j=1}^{\infty }\mu _{j}^{2} \pr{1+\lambda _{j}^{\frac{d+1}{2}}}\left\vert
x\right\vert _{V^{\prime }}^{2}.
\end{equation*}%
This inequality is guaranteed due to the estimates in the Appendix.
For these reasons, we ask that
\begin{equation*}
\sum_{j=1}^{\infty }\mu _{j}^{2} \pr{1+\lambda _{j}^{\frac{d+1}{2}}}\leq C.
\end{equation*}
\end{itemize}

\begin{remark}
\begin{enumerate}
\item Note that, from the last two assumptions, we get that the form of the noise coefficient is%
\begin{equation*}
\Sigma \left( X\right) dW_{t}=\sum_{j=1}^{\infty }Xe_{j}\mu _{j}d\beta
_{j}\left( t\right) ,\quad \forall X\in V^{\prime },~t\geq 0.
\end{equation*}
\item The assumption on $j$ in the first point guarantees, in particular, that the range of $\Psi$ covers the whole $\mathbb{R}$.
\item The constant $C_5$ can be chosen to be positive and this guarantees coverage of cases when $\Psi$ is null on a compact domain, as it is the case, for instance, in Stefan-like problems.
\end{enumerate}
\end{remark}
\subsection{A notion of solution }\label{Subsec2.3}
In order to explain the choice of the solution for equation (\ref{ecu}),\ we
shall first rewrite the equation as a problem set on the space $V^{\prime }$.%
\begin{equation}
\left\{ 
\begin{array}{ll}
dX\left( t\right) +A\left( X\left( t\right) \right) dt+F_{u}\left( t\right)
dt+F_{s}\left( t\right) dt=\Sigma \left( X\left( t\right) \right) dW_{t} 
,\quad \textnormal{for } t\in \left( 0,T\right), \\ 
X\left( 0\right) =x,
\end{array}%
\right.  \label{ecu2}
\end{equation}%
where the nonlinear operator $
A:D\left( A\right) \in V^{\prime }\rightarrow V^{\prime }$
is defined by 
\begin{equation*}
_{V^{\prime }}\left\langle A\left( X\right) ,\varphi \right\rangle
_{V}=\int_{\mathcal{O}}\left( \nabla \Psi \left( X\right) \cdot \nabla
\varphi -KX\frac{\partial \varphi }{\partial x_{3}}\right) d\xi
+\int_{\Gamma _{u}}\alpha Tr\left( \Psi \left( X\right) \right) Tr\left(
\varphi \right) d\sigma ,
\end{equation*}%
for all $\varphi \in V$. The natural domain is%
\begin{equation*}
D\left( A\right) =\left\{ X\in L^{2}\left( \mathcal{O}\right) ;~\Psi \left(
X\right) \in H^{1}\left( \mathcal{O}\right) \right\} .
\end{equation*}
The reader is invited to note that, since $\Psi \left( X\right) \in H^{1}\left( \mathcal{O}\right) $,
then the trace $Tr\left( \Psi \left( X\right) \right) \in L^{2}\left( \Gamma\right) $ and therefore the operator $A$ is well defined.\\

\noindent The second and the third operators from the equation (\ref{ecu2}) belong to $%
L^{2}\left( 0,T;V^{\prime }\right) $, and they are defined by 
\begin{equation*}
F_{u}\left( t\right) \left( \varphi \right) =-\int_{\Gamma _{u}}u\left(
t\right) Tr\left( \varphi \right) d\sigma ,\quad \forall \varphi \in V,
\end{equation*}%
respectively by%
\begin{equation*}
F_{s}\left( t\right) \left( \varphi \right) =-\int_{\Gamma_s}s\left(
t\right) Tr\left( \varphi \right) d\sigma ,\quad \forall \varphi \in V.
\end{equation*}

\noindent With the notations above, the equation (\ref{ecu2}) is well posed as a Cauchy
problem in $V^{\prime }$ and we use this formulation to construct a
variational solution by testing the equality against the eigenfunctions, i.e., by requiring
\begin{eqnarray*}
&&\left\langle X\left( t\right) ,e_{j}\right\rangle _{V^{\prime
}}+\int_{0}^{t}\left\langle A\left( X\left( r\right) \right)
,e_{j}\right\rangle _{V^{\prime }}dr +\int_{0}^{t}\left\langle F_{u}\left( r\right) ,e_{j}\right\rangle
_{V^{\prime }}dr+\int_{0}^{t}\left\langle F_{s}\left( r\right)
,e_{j}\right\rangle _{V^{\prime }}dr \\
&&=\left\langle x,e_{j}\right\rangle _{V^{\prime }}+\int_{0}^{t}\left\langle
\Sigma \left( X\left( r\right) \right) dW_{r},e_{j}\right\rangle _{V^{\prime
}},\quad \forall t\in \left[ 0,T\right] ,
\end{eqnarray*}%
$\mathbb{P}$-a.s., and for all $e_{j}$ from the orthonormal basis specified
above.\\
\noindent In order to define a more detailed form of the solution we make some explicit computations which are also going to be useful throughout the paper in order to pass from the nonlinear operators of type $A$ to the underlying real functions $\Psi$, respectively to $K$. We have:%
\begin{eqnarray}
\left\langle A\left( X\left( r\right) \right) ,e_{j}\right\rangle
_{V^{\prime }} &=&\int_{\mathcal{O}}\nabla \Psi \left( X\right) \cdot \nabla
\varphi _{j}d\xi -\int_{\mathcal{O}}KX\frac{\partial \varphi _{j}}{\partial
x_{3}}d\xi+\int_{\Gamma _{u}}\alpha Tr\left( \Psi \left( X\right) \right) Tr\left(
\varphi _{j}\right) d\sigma  \label{calc}
\end{eqnarray}%
where $\varphi _{j} =\lambda_j^{-1}e_j$ satisfies the equation 
\begin{equation*}
-\Delta \varphi _{j}=e_{j},\textnormal{ on }\mathcal{O},\ \dfrac{\partial \varphi _{j}}{\partial \nu }+\alpha \varphi _{j}=0, \textnormal{ on }\Gamma
_{u},\ 
\dfrac{\partial \varphi _{j}}{\partial \nu }=0, \textnormal{ on }\Gamma _{s}.
\end{equation*}
The first term on the right-hand side in (\ref{calc}) is treated using Green's formula to get 
\begin{eqnarray*}
\int_{\mathcal{O}}\nabla \Psi \left( X\right) \cdot \nabla \varphi _{j}d\xi
&=&-\int_{\mathcal{O}}\Psi \left( X\right) \Delta \varphi _{j}d\xi
+\int_{\Gamma _{u}}Tr\left( \Psi \left( X\right) \right) \nabla \varphi
_{j}\cdot \nu d\sigma \\
&=&\int_{\mathcal{O}}\Psi \left( X\right) e_{j}d\xi -\int_{\Gamma
_{u}}\alpha Tr\left( \Psi \left( X\right) \right) Tr\left( \varphi
_{j}\right) d\sigma
\end{eqnarray*}%
and, therefore,%
\begin{equation}
\left\langle A\left( X\left( r\right) \right) ,e_{j}\right\rangle
_{V^{\prime }}=\int_{\mathcal{O}}\left( \Psi \left( X\right) e_{j}-KX\frac{%
\partial \varphi _{j}}{\partial x_{3}}\right) d\xi.  \label{calc2}
\end{equation}

We can now give a precise definition of what the solution of our equation should be seen as.

\bigskip

\begin{definition}\label{DefSol}
Let us consider an initial condition $x\in V^{\prime }$. A $V^{\prime }-$valued continuous, $\mathbb{F}-$%
adapted stochastic process $X$ is called solution to equation (\ref{ecu}) on 
$\left[ 0,T\right] $ if $X\in L^{\infty }\left( 0,T;L^{2}\left( \Omega
;V^{\prime }\right) \right) $ and the following equality holds true for all the eigenfunctions $e_j$: 
\begin{eqnarray*}
&&\left\langle X\left( t\right) ,e_{j}\right\rangle _{V^{\prime }}+\int_{%
\mathcal{O}}\Psi \left( X\right) e_{j}d\xi -\int_{\mathcal{O}}KX\frac{%
\partial \varphi _{j}}{\partial x_{3}}d\xi
+\int_{0}^{t}\left\langle F_{u}\left( r\right) ,e_{j}\right\rangle
_{V^{\prime }}dr+\int_{0}^{t}\left\langle F_{s}\left( r\right)
,e_{j}\right\rangle _{V^{\prime }}dr \\
&=&\left\langle x,e_{j}\right\rangle _{V^{\prime }}+\int_{0}^{t}\left\langle
\sigma \left( X\left( r\right) \right) dW_{t},e_{j}\right\rangle _{V^{\prime
}},\quad \forall t\in \left[ 0,T\right] ,
\end{eqnarray*}
\end{definition}

$\mathbb{P}$-a.s. and for all $e_{j}$ from the orthonormal basis defined
above and where $\varphi _{j}$ satisfies the equation
\begin{equation}
-\Delta \varphi _{j}=e_{j},\textnormal{ on }\mathcal{O},\ \dfrac{\partial \varphi _{j}}{\partial \nu }+\alpha \varphi _{j}=0, \textnormal{ on }\Gamma
_{u},\ 
\dfrac{\partial \varphi _{j}}{\partial \nu }=0, \textnormal{ on }\Gamma _{s}.
\label{aprox}
\end{equation}
\subsection{Quasi-m-accretivity of the operator $A$}\label{Subsec2.4}
Before proceeding to the main result, we look into monotonicity properties of the operator $A$ previously defined.
\begin{lemma}
\label{lema1}Under the previous assumptions, the operator $A$ is quasi
m-accretive in $V^{\prime }$ provided that $C_0>0$. When $K=C_0=0$,the operator remains quasi-accretive.
\end{lemma}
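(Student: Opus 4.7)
The plan is to split quasi-m-accretivity into two ingredients: the quasi-accretivity inequality for $A+\omega I$ and the range/surjectivity condition $\mathcal{R}(I+\lambda(A+\omega I))=V'$ for small $\lambda>0$. Together these yield quasi-m-accretivity in the Hilbert space $V'$, with the precise value $\omega=K^2/(2C_0)$ when $C_0>0$, and $\omega=0$ (plain accretivity) in the degenerate case $K=C_0=0$.

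For quasi-accretivity, fix $X,Y\in D(A)$ and let $\varphi\in V$ be the unique solution of the Robin-Neumann problem $-\Delta\varphi=X-Y$ on $\mathcal{O}$ with $\partial_\nu\varphi+\alpha\varphi=0$ on $\Gamma_u$ and $\partial_\nu\varphi=0$ on $\Gamma_s$, so that $|X-Y|_{V'}^2=|\varphi|_V^2=\int_\mathcal{O}|\nabla\varphi|^2+\int_{\Gamma_u}\alpha|\varphi|^2$ and $\langle A(X)-A(Y),X-Y\rangle_{V'}=\langle A(X)-A(Y),\varphi\rangle_{V',V}$. Performing exactly the computation (\ref{calc})-(\ref{calc2}) with $\Psi(X)-\Psi(Y)$ in place of $\Psi(X)$, the $\Gamma_u$-boundary integrals coming from Green's formula cancel the $\alpha$-boundary contribution built into $A$, leaving
\[
\langle A(X)-A(Y),X-Y\rangle_{V'}=\int_{\mathcal{O}}(\Psi(X)-\Psi(Y))(X-Y)\,d\xi-K\int_{\mathcal{O}}(X-Y)\,\frac{\partial\varphi}{\partial x_3}\,d\xi.
\]
The assumption $(\Psi(r)-\Psi(s))(r-s)\geq C_0(r-s)^2$ bounds the first term below by $C_0|X-Y|_{L^2}^2$, while Cauchy-Schwarz together with $|\nabla\varphi|_{L^2}\leq|X-Y|_{V'}$ and Young's inequality with parameter $C_0/2$ absorb the drift into $\tfrac{C_0}{2}|X-Y|_{L^2}^2+\tfrac{K^2}{2C_0}|X-Y|_{V'}^2$, which yields the claimed quasi-accretivity with shift $K^2/(2C_0)$. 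In the degenerate case $K=C_0=0$ the drift vanishes identically and $\Psi$-monotonicity alone furnishes plain accretivity.

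For the range condition, given $f\in V'$ and $\lambda>0$ small, we seek $X\in D(A)$ with $(1+\lambda\omega)X+\lambda A(X)=f$. Since $\Psi$ is continuous, strictly increasing with $\Psi(0)=0$, and onto $\mathbb{R}$ (from the growth hypothesis on $j$), $\Psi^{-1}$ is everywhere defined and $1/C_0$-Lipschitz. Setting $Y=\Psi(X)$ reformulates the problem as finding $Y\in V$ with
\[
(1+\lambda\omega)\!\int_{\mathcal{O}}\!\Psi^{-1}(Y)\varphi\,d\xi+\lambda\!\int_{\mathcal{O}}\!\nabla Y\cdot\nabla\varphi\,d\xi-\lambda K\!\int_{\mathcal{O}}\!\Psi^{-1}(Y)\frac{\partial\varphi}{\partial x_3}\,d\xi+\lambda\!\int_{\Gamma_u}\!\alpha Y\varphi\,d\sigma=\langle f,\varphi\rangle_{V',V}
\]
for every $\varphi\in V$. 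The left-hand side defines an operator $T\colon V\to V'$ which is hemicontinuous (continuity of $\Psi^{-1}$), monotone for $\lambda$ small enough (via the same Young-inequality reabsorption of the drift as in the accretivity step), and coercive thanks to the dominating Robin-Laplace term $\lambda|Y|_V^2$. A direct application of Browder-Minty produces $Y\in V$, hence $X=\Psi^{-1}(Y)\in D(A)$ solving the range equation. The main obstacle throughout is the non-variational, non-monotone gravity drift $K\,\partial/\partial x_3$: it forces the quantitative shift $\omega=K^2/(2C_0)$ in both steps and explains why strict monotonicity $C_0>0$ is indispensable whenever $K\neq 0$, while the fully degenerate case $K=C_0=0$ is benign because the drift simply disappears.
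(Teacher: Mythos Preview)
Your proof is correct and follows essentially the same route as the paper: the accretivity computation via Green's formula and the key bound $|\partial_{x_3}\varphi|_{L^2}\le |X-Y|_{V'}$ is identical, and your surjectivity argument (substitute $Y=\Psi(X)$, then apply Browder--Minty to the resulting hemicontinuous, monotone, coercive operator on $V$) is exactly the paper's strategy. The only cosmetic difference is that you produce the explicit shift $\omega=K^2/(2C_0)$ via Young's inequality, whereas the paper simply writes $\mu|x-y|_{V'}^2+C_0|x-y|_{L^2}^2-K|x-y|_{L^2}|x-y|_{V'}\ge 0$ ``for $\mu$ large enough''.
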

\begin{proof}
Let $\mu$ be a positive real number which is assumed to be large enough. We aim at proving that
\begin{equation*}
\left\langle \left( \mu I+A\right) x-\left( \mu I+A\right)
y,x-y\right\rangle _{V^{\prime }}\geq 0,\text{ for all }x,y\in V^{\prime }.
\end{equation*}%
When $C_0>0$, we further prove that the range of the operator $\mu I+A$ is full, i.e.,
\begin{equation*}
\mathcal{R}\left( \mu I+A\right) =V^{\prime }.
\end{equation*}%
We denoted by $I$ the identity operator on $V'$ and by $\mathcal{R}$ the range of the operator 
$\mu I+A$.

We start with the accretivity of the operator, i.e.%
\begin{eqnarray*}
\left\langle \left( \mu I+A\right) x-\left( \mu I+A\right)
y,x-y\right\rangle _{V^{\prime }}
&=&\mu \left\vert x-y\right\vert _{V^{\prime }}^{2}+\int_{\mathcal{O}}\left(
\nabla \Psi \left( x\right) -\nabla \Psi \left( y\right) \right) \cdot
\nabla \varphi d\xi\\ &&-K\int_{\mathcal{O}}\left( x-y\right) \frac{\partial
\varphi }{\partial x_{3}}d\xi +\int_{\Gamma _{u}}\alpha \left( \Psi \left( x\right) -\Psi \left(
y\right) \right) \varphi d\sigma,
\end{eqnarray*}%
where%
\begin{equation}
-\Delta \varphi =x-y,\textnormal{ on }\mathcal{O},
\ \dfrac{\partial \varphi }{\partial \nu }+\alpha \varphi =0,\textnormal{ on }\Gamma _{u},\ \dfrac{\partial \varphi }{\partial \nu }=0,\textnormal{ on } \Gamma _{s}. \label{stea}
\end{equation}

Using Green's formula, we get 
\begin{eqnarray*}
\int_{\mathcal{O}}\left( \nabla \Psi \left( x\right) -\nabla \Psi \left(
y\right) \right) \cdot \nabla \varphi d\xi 
&=&-\int_{\mathcal{O}}\left( \Psi \left( x\right) -\Psi \left( y\right)
\right) \Delta \varphi d\xi +\int_{\Gamma _{u}}\left( \Psi \left( x\right)
-\Psi \left( y\right) \right) \dfrac{\partial \varphi }{\partial \nu }d\sigma
\\
&=&\int_{\mathcal{O}}\left( \Psi \left( x\right) -\Psi \left( y\right)
\right) (x-y) d\xi -\int_{\Gamma _{u}}\alpha \left( \Psi \left(
x\right) -\Psi \left( y\right) \right) \varphi d\sigma.
\end{eqnarray*}%
This further yields  
\begin{eqnarray*}
&&\left\langle \left( \mu I+A\right) x-\left( \mu I+A\right)
y,x-y\right\rangle _{V^{\prime }} \\
&=&\mu \left\vert x-y\right\vert _{V^{\prime }}^{2}+\int_{\mathcal{O}}\left(
\Psi \left( x\right) -\Psi \left( y\right) \right) (x-y) d\xi
-K\int_{\mathcal{O}}\left( x-y\right) \frac{\partial \varphi }{\partial x_{3}%
}d\xi \\
&\geq &\mu \left\vert x-y\right\vert _{V^{\prime }}^{2}+C_{0}\left\vert
x-y\right\vert _{L^{2}\left( \mathcal{O}\right) }^{2}-K\left\vert
x-y\right\vert _{L^{2}\left( \mathcal{O}\right) }\left\vert \frac{\partial
\varphi }{\partial x_{3}}\right\vert _{L^{2}\left( \mathcal{O}\right) }.
\end{eqnarray*}
One easily notes that
\begin{equation}
\left\vert \frac{\partial \varphi }{\partial x_{3}}\right\vert _{L^{2}\left( 
\mathcal{O}\right) }=\left\vert \left\langle \nabla \varphi
,i_{3}\right\rangle _{\mathbb{R}^{3}}\right\vert _{L^{2}\left( \mathcal{O}%
\right) }\leq \left\vert \nabla \varphi \right\vert _{L^{2}\left( \mathcal{O%
}\right) }.  \label{patrat}
\end{equation}
On the other hand, from (\ref{stea}), it follows that 
\begin{equation*}
\left\vert x-y\right\vert _{V^{\prime }}^{2}=\left\vert -\Delta \varphi
\right\vert _{V^{\prime }}^{2}=\left( -\Delta \varphi \right) \left( \gamma
\right) =\left\langle -\Delta \varphi ,\gamma \right\rangle _{2},
\end{equation*}%
where $\gamma $ is the solution of%
\begin{equation*}
-\Delta \gamma =-\Delta \varphi ,\textnormal{ on }\mathcal{O},\ \dfrac{\partial \gamma }{\partial \nu }+\alpha \gamma =0,\textnormal{ on } \Gamma _{u},\ 
\dfrac{\partial \gamma }{\partial \nu }=0,\textnormal{ on }\Gamma _{s}.%
\label{2stea}
\end{equation*}
Since $-\Delta \varphi =x-y,$ by the uniqueness of the solution of the previous equation, it follows that $\gamma =\varphi $.
Therefore, by invoking once again, Green's formula,%
\begin{equation*}
\left\vert x-y\right\vert _{V^{\prime }}^{2}=\left\langle -\Delta \varphi
,\varphi \right\rangle _{2}\geq \left\vert \nabla \varphi \right\vert
_{L^{2}\left( \mathcal{O}\right) }^{2}.
\end{equation*}
Going back to (\ref{patrat}), it follows that%
\begin{equation}
\left\vert \frac{\partial \varphi }{\partial x_{3}}\right\vert _{L^{2}\left( 
\mathcal{O}\right) }\leq \left\vert x-y\right\vert _{V^{\prime }},
\label{eess}
\end{equation}
and, as a consequence,
\begin{equation}\label{EstimQuasi}\begin{split}
\left\langle \left( \mu I+A\right) x-\left( \mu I+A\right)
y,x-y\right\rangle _{V^{\prime }}&\geq \mu \left\vert x-y\right\vert _{V^{\prime }}^{2}+C_{0}\left\vert
x-y\right\vert _{L^{2}\left( \mathcal{O}\right) }^{2}-K\left\vert
x-y\right\vert _{L^{2}\left( \mathcal{O}\right) }\left\vert x-y\right\vert
_{V^{\prime }}\\&\geq 0,\end{split}
\end{equation}%
for $\mu $ large enough. The reader is invited to note that the same holds true if $K=0$, for every $\mu>0$, and regardless of the value of $C_0\geq0$.
We shall prove now the maximality of the operator under the assumption that $C_0>0$. More precisely, we show
that for every $g\in V^{\prime }$ there exist $\theta \in D\left( A\right) $
which solve the equation 
\begin{equation}
\mu x+Ax=g.  \label{floare}
\end{equation}

Since $\Psi $\ is continuous and strongly monotone, increasing on $\left(
-\infty ,+\infty \right) $ and $R\left( \Psi \right) =\left( -\infty
,+\infty \right) $, one establishes that $\Psi ^{-1}$ is Lipschitz-continuous, and, therefore, it is
continuous from $V$ to $L^{2}\left( \mathcal{O}\right)$. (\ref{floare}) can be rewritten as 
$
\mu \Psi ^{-1}\left( y\right) +\overline{A}\left( y\right) =g,$
with $\overline{A}:V\rightarrow V^{\prime }$ defined by 
\begin{equation*}
_{V^{\prime }}\left\langle \overline{A}\left( y\right) ,\varphi
\right\rangle _{V}=\int_{\mathcal{O}}\nabla y\cdot \nabla \varphi d\xi
-\int_{\mathcal{O}}K\Psi ^{-1}\left( y\right) \frac{\partial \varphi }{%
\partial x_{3}}dx+\int_{\Gamma _{u}}\alpha \Psi ^{-1}\left( y\right) \varphi
d\sigma
\end{equation*}%
for $\forall \varphi \in V$.

By elementary computations, we can check that $\Psi ^{-1}+\overline{A}$\ is
continuous from $V$ to $V^{\prime }$, monotone and coercive. By using
Minty's theorem (see \cite{limita}) we have that $\Psi ^{-1}+\overline{A}$
is surjective which implies the existence of an unique solution to equation (%
\ref{floare}).
\end{proof}

\begin{remark}
We have chosen to give the proof here rather than postponing it to an appendix, since inequalities like \eqref{eess} and the interplay between $\mu$ large enough and $C_0$ and $K$ as emphasized in the proof of (quasi-)accretivity, i.e., through the inequality \eqref{EstimQuasi}, will play an important part in our subsequent arguments.
\end{remark}

\section{The well-posedness result}\label{Sec3}
We are now able to formulate and prove the main result of this paper.
\begin{theorem}\label{ThMain}
For every $x\in L^{2}\left( \mathcal{O}\right) ,$ the equation (\ref{ecu}) has a
unique solution in the sense of the Definition \ref{DefSol}, such that $X\in
C_{W}\left( \left[ 0,T\right] ;L^{2}\left( \Omega ;L^{2}\left( \mathcal{O}%
\right) \right) \right) $.
\end{theorem}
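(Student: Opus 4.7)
The plan is to proceed by a two-parameter approximation $(\lambda,\varepsilon)$, following the structure announced in the introduction. First I would replace $\Psi$ by its Yosida regularization $\Psi_{\lambda}=\lambda^{-1}(I-(I+\lambda\Psi)^{-1})$, which is globally Lipschitz of constant $\lambda^{-1}$, maximal monotone, and satisfies $\Psi_{\lambda}(0)=0$ together with $j_{\lambda}(r):=\int_{0}^{r}\Psi_{\lambda}(s)ds\le j(r)$ and $\Psi_{\lambda}(r)\to\Psi(r)$ pointwise. A second parameter $\varepsilon>0$ would then be added in order to restore genuine coercivity at the $L^{2}(\mathcal{O})$ level of the resulting drift (for instance, by replacing $\Psi_{\lambda}$ with $\Psi_{\lambda}+\varepsilon\,\mathrm{id}$, producing a strictly monotone, surjective nonlinearity with $L^{2}$-valued inverse). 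For each fixed $(\lambda,\varepsilon)$, Lemma~\ref{lema2} then furnishes a unique process $X_{\lambda,\varepsilon}\in C_{W}([0,T];L^{2}(\Omega;L^{2}(\mathcal{O})))$ solving the regularized equation, via the classical variational framework for monotone SPDEs built on the quasi-m-accretivity established in Lemma~\ref{lema1}.

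The second step is the passage $\varepsilon\to 0$. I would apply It\^o's formula in $V'$ to $|X_{\lambda,\varepsilon}|_{V'}^{2}$, using the identity \eqref{calc2}, the quasi-accretivity bound \eqref{EstimQuasi}, and the cornerstone estimate \eqref{eess} to absorb the non-monotone gravity drift $K\partial_{x_{3}}$ into the $V'$-norm. Combined with the coercivity $j(r)\ge C_{3}|r|^{m+1}+C_{4}r^{2}-C_{5}$, the growth of $\Psi$, the Hilbert--Schmidt tail condition of Assumption~5), and the BDG inequality applied to the martingale part, this should yield the uniform-in-$\varepsilon$ bounds of Lemma~\ref{lema3}, schematically
\[\mathbb{E}\sup_{t\le T}|X_{\lambda,\varepsilon}(t)|_{V'}^{2}+\mathbb{E}\int_{0}^{T}\!\!\int_{\mathcal{O}}j_{\lambda}(X_{\lambda,\varepsilon})\,d\xi\,dt\le C.\]
Weak/weak-$*$ compactness extracts a candidate limit $X_{\lambda}$, and a Minty-type monotonicity argument (applied to $\Psi_{\lambda}+\overline{A}$, as in the proof of Lemma~\ref{lema1}) identifies the nonlinear term, producing a solution of the $\lambda$-level equation.

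The third step is the passage $\lambda\to 0$. I would revisit the same It\^o computation, now tracked uniformly in $\lambda$ using $r\Psi_{\lambda}(r)\ge j_{\lambda}(r)$ and the classical convergence properties of the Yosida family, then compare two levels: applying It\^o to $|X_{\lambda}-X_{\lambda'}|_{V'}^{2}$ and exploiting once more Lemma~\ref{lema1}, the gravity estimate \eqref{eess}, and the Lipschitz structure of $\Sigma$ in the $V'$-topology, one shows that $\{X_{\lambda}\}$ is Cauchy in $L^{2}(\Omega;C([0,T];V'))$. The limit $X$ is then shown, via a further monotonicity identification combined with the uniform $L^{m+1}$-bound derived from the coercivity of $j$, to satisfy the variational identity of Definition~\ref{DefSol} against every eigenfunction $e_{j}$. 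Uniqueness follows from the same It\^o/Gr\"onwall argument applied to the $V'$-difference of two solutions, relying only on the quasi-accretivity of $A$ and the Lipschitz character of $\Sigma$.

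The principal obstacle, in my view, is the simultaneous treatment of the gravity drift (which is \emph{not} monotone in the $V'$ metric), the boundary functionals $F_{u},F_{s}$, and the custom Robin-type inner product on $V'$. The gravity drift can only be controlled through \eqref{eess}, at the cost of absorption by $\mu|x-y|_{V'}^{2}$ for $\mu$ sufficiently large, and this absorption must survive every step of the approximation scheme without spoiling the uniform character of the estimates. In addition, the polynomial growth of $\Psi$ (with exponent $m\ge 1$) makes the identification of the weak limit of $\Psi_{\lambda}(X_{\lambda})$ technically delicate, since it must be performed in the weak $V'$-topology while simultaneously using the strong $L^{m+1}$-information supplied by the coercivity of $j$.
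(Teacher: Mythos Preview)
Your overall architecture --- two-parameter regularization, pass to the limit first in the inner parameter then in the outer one, uniqueness via quasi-accretivity and Gr\"onwall --- matches the paper. But your description of the $\varepsilon$-layer is not what the paper does, and as written it leaves a gap. In the paper the $\lambda$-step already replaces $\Psi$ by $\widetilde{\Psi}_\lambda=\lambda I+\Psi_\lambda$, so your proposed ``add $\varepsilon\,\mathrm{id}$ to $\Psi_\lambda$'' is absorbed into the first layer. The paper's actual $\varepsilon$-step is of a different nature: one takes the Yosida approximation \emph{of the full operator} $A_\lambda^\mu=\mu I+A_\lambda$ in the Hilbert space $V'$, namely $A_\lambda^{\mu,\varepsilon}=\varepsilon^{-1}(I-J_\lambda^{\mu,\varepsilon})$ with $J_\lambda^{\mu,\varepsilon}=(I+\varepsilon A_\lambda^\mu)^{-1}$. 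This is what makes the drift Lipschitz from $V'$ to $V'$, so that existence in $C_W([0,T];L^2(\Omega;V'))$ follows from standard Hilbert-space SDE theory. Lemma~\ref{lema2} is then \emph{not} an existence statement via a variational framework: it shows that the resolvent $J_\lambda^{\mu,\varepsilon}$ is also Lipschitz in $L^2(\mathcal{O})$, hence so is $A_\lambda^{\mu,\varepsilon}$, which upgrades the solution to $C_W([0,T];L^2(\Omega;L^2(\mathcal{O})))$. Your $\varepsilon$-regularization does not make the drift Lipschitz on $V'$ (it still contains an unbounded $-\Delta$), so neither Lipschitz SDE theory nor Lemma~\ref{lema2} as stated applies; invoking the Krylov--Rozovskii variational framework instead would be a different proof requiring its own verification of coercivity and boundedness in the presence of the gravity term and the Robin boundary integral.

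The $\varepsilon\to 0$ step in the paper is also sharper than your sketch suggests. Lemma~\ref{lema3} establishes (i) weak compactness of $X_\lambda^\varepsilon$ in $L^\infty(0,T;L^2(\Omega;L^2))$, (ii) \emph{strong} convergence of $X_\lambda^\varepsilon$ in $L^\infty(0,T;L^2(\Omega;V'))$ and of $J_\lambda^{\mu,\varepsilon}(X_\lambda^\varepsilon)$ in $L^2(0,T;L^2(\Omega;L^2))$, obtained by a Cauchy argument in $(\varepsilon,\varepsilon')$ that relies crucially on the uniform bound $\mathbb{E}\int_0^T|A_\lambda^{\mu,\varepsilon}(X_\lambda^\varepsilon)|_{V'}^2\,dr\le C(\lambda)$ (this bound in turn requires an It\^o computation in $L^2(\mathcal{O})$, not only in $V'$), and (iii) strong $L^2$ convergence of $\widetilde{\Psi}_\lambda(J_\lambda^{\mu,\varepsilon}(X_\lambda^\varepsilon))$. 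The nonlinearity at the $\lambda$-level is identified via the weak--strong closedness of $\Psi_\lambda$, using this strong convergence of the resolvent; a Minty argument on ``$\Psi_\lambda+\overline A$'' is not how the paper proceeds here. Your treatment of the $\lambda\to 0$ step (It\^o on $|X_\lambda-X_{\lambda'}|_{V'}^2$, absorption of the gravity drift via \eqref{eess}, Cauchy in $V'$, then identification of $\eta=\Psi(X)$ using the $L^{m+1}$ bound from the coercivity of $j$) is, by contrast, essentially the paper's argument.
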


\bigskip

\begin{proof}
We have already established that the operator $A$ as defined above is quasi-accretive in $%
V^{\prime }$.
The equation (\ref{ecu}) can be rewritten, for $\mu>0$ large enough, in the
following equivalent form%
\begin{equation}
\left\{ 
\begin{array}{l}
dX\left( t\right) +\left( \mu I+A\right) \left( X\left( t\right) \right)
dt-\mu X\left( t\right) dt+F_{u}\left( t\right) dt+F_{s}\left( t\right)
dt\medskip \medskip  =\Sigma \left( X\left( t\right) \right) dW_{t},~t\in \left(
0,T\right), \\ 
X\left( 0\right) =x.%
\end{array}%
\right.  \label{echiv}
\end{equation}
It is obvious that a solution for $\mu>0$ is also a solution for some $\mu'=0$, and vice-versa. Furthermore, the accretivity property of $\mu I+A$ allows one to prove the uniqueness of the solution of \eqref{echiv}. Since these arguments are standard and follow from the inequality in \eqref{EstimQuasi}, we will omit the uniqueness proofs and concentrate on the \emph{existence} arguments.\\

\noindent We first approximate the operator $\Psi $ by $\widetilde{\Psi }%
_{\lambda }=\lambda I+\Psi _{\lambda }$ where $\Psi _{\lambda }$ is the
Yosida approximation of $\Psi $, i.e.%
\begin{equation*}
\Psi _{\lambda }\left( r\right) =\frac{1}{\lambda }\left( r-\left( I+\lambda
\Psi \right) ^{-1}\left( r\right) \right) =\Psi \left( \left( I+\lambda \Psi
\right) ^{-1}\left( r\right) \right) ,\quad \forall r\in \mathbb{R}\text{.}
\end{equation*}%
We denote the resolvent of $\Psi $ by $J_{\lambda }=\left( I+\lambda \Psi
\right) ^{-1}$. The reader is invited to note that, by a slight abuse of notation, we refer to $I$ as the identity function on $\mathbb{R}$, although, further on, this will also refer to the identity on $V'$.

Note that $\widetilde{\Psi }_{\lambda }$ is Lipschitz-continuous and strongly increasing in $\mathbb{R}$. Let us denote by $A_{\lambda }$ the
operator which is defined as $A$ where one replaces $\Psi $ by its
approximation $\widetilde{\Psi }_{\lambda }$. In particular, note that this operator is quasi m-accretif, owing to Lemma \ref{lema1}. Furthermore, the constant $\mu$ can be chosen large enough, and, prior to Step III, we will fix $\lambda>0$ and pick $\mu=\mu(\lambda)$.

This gives the first approximation of the equation (\ref{ecu})%
\begin{equation}
\left\{ 
\begin{array}{l}
dX_{\lambda }\left( t\right) +\left( \mu I+A_{\lambda }\right) \left(
X_{\lambda }\left( t\right) \right) dt-\mu X_{\lambda }\left( t\right)
dt+F_{u}\left( t\right) dt+F_{s}\left( t\right) dt=\Sigma \left( X_{\lambda }\left( t\right) \right)
dW_{t},~t\in \left( 0,T\right), \medskip \medskip \\ 
X\left( 0\right) =x.%
\end{array}%
\right.  \label{aprox1}
\end{equation}

\noindent In order to get the existence of the solution, we need another
approximation. By Lemma \ref{lema1} the operator $A_{\lambda }^{\mu }=\mu
I+A_{\lambda }$ is m-accretive in $V^{\prime }$ and, therefore, one can take
the Yosida approximation of the operator $A_{\lambda }^{\mu }$ in $V^{\prime
}$. For readers convenience, let us recall that 
\begin{equation*}
J_{\lambda }^{\mu, \varepsilon }=\left( I+\varepsilon A_{\lambda }^{\mu }\right)
^{-1},\quad \forall \varepsilon >0,
\end{equation*}%
is the resolvent of $A_{\lambda }^{\mu }$ and%
\begin{equation*}
A_{\lambda }^{\mu ,\varepsilon }=\frac{1}{\varepsilon }\left( I-J_{\lambda }^{\mu, \varepsilon }\right) =A_{\lambda }^{\mu }\left( J_{\lambda }^{\mu, \varepsilon }\right) ,\quad \forall \varepsilon >0,
\end{equation*}%
is the Yosida approximation of $A_{\lambda }^{\mu }$.

Since the operator $A_{\lambda }^{\mu ,\varepsilon }$ is Lipschitz in $%
V^{\prime }$, it follows by standard theory for stochastic
equations in Hilbert spaces that the approximating equation%
\begin{equation}
\left\{ 
\begin{array}{l}
dX_{\lambda }^{\varepsilon }\left( t\right) +A_{\lambda }^{\mu ,\varepsilon
}\left( X_{\lambda }^{\varepsilon }\left( t\right) \right) dt-\mu X_{\lambda
}^{\varepsilon }\left( t\right) dt+F_{u}\left( t\right) dt+F_{s}\left(
t\right) dt
=\sigma \left( X_{\lambda }^{\varepsilon }\left( t\right) \right)
dW_{t},~t\in \left( 0,T\right), \\ 
X\left( 0\right) =x,%
\end{array}%
\right.  \label{aprox2}
\end{equation}%
has a unique strong solution $X_{\lambda }^{\varepsilon }$ belonging to $C_{W}\left( %
\left[ 0,T\right] ;L^{2}\left( \Omega ;V^{\prime }\right) \right) $ with\\ $%
A_{\lambda }^{\mu ,\varepsilon }\left( X_{\lambda }^{\varepsilon }\left(
t\right) \right) \in C_{W}\left( \left[ 0,T\right] ;L^{2}\left( \Omega
;V^{\prime }\right) \right) $. We recall that the subscript $W$ specifies adaptedness with respect to the natural filtration, hence with respect to $\mathbb{F}$.

\bigskip

\noindent \textbf{Step I }(existence of the solution for the approximating equation in 
$L^{2}(\mathcal{O}$.)
In order to have also the existence of the solution in $C_{W}\left( \left[
0,T\right] ;L^{2}\left( \Omega ;L^{2}\left( \mathcal{O}\right) \right)
\right) ,$ we shall prove the following preliminary results.

\begin{lemma}\label{lema2}
The resolvent $y\mapsto J_{\lambda }^{\mu, \varepsilon }\left( y\right) =\left(
I+\varepsilon A_{\lambda }^{\mu }\right) ^{-1}\left( y\right) $ is Lipschitz
continuous in $L^{2}\left( \mathcal{O}\right) $.
\end{lemma}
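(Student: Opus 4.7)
Fix $y_1,y_2\in L^2(\mathcal{O})$ and write $x_i:=J_\lambda^{\mu,\varepsilon}(y_i)$ for $i=1,2$. The plan is to subtract the two defining resolvent identities $(1+\varepsilon\mu)x_i+\varepsilon A_\lambda(x_i)=y_i$ and pair the resulting difference with the carefully chosen test function $z:=\widetilde{\Psi}_\lambda(x_1)-\widetilde{\Psi}_\lambda(x_2)$. Exploiting the bi-Lipschitz nature of $\widetilde{\Psi}_\lambda$ (strongly monotone with constant $\lambda$, globally Lipschitz with constant $\lambda+1/\lambda$) should simultaneously produce an $L^2$-coercivity term $\lambda |x_1-x_2|_{L^2}^2$ coming from $(x_1-x_2,z)_{L^2}$ and a Dirichlet energy $|\nabla z|_{L^2}^2$ together with the non-negative boundary contribution $\int_{\Gamma_u}\alpha z^2\,d\sigma$ coming from the explicit form (\ref{calc}) of $A_\lambda$.

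A preliminary observation is that each $x_i$, and therefore also $z$, actually lies in $V$. Indeed, by the same Minty-type argument already used in Lemma \ref{lema1} (which applies verbatim to $A_\lambda$, since $\widetilde{\Psi}_\lambda$ inherits the required properties, in fact with $C_0=\lambda>0$), the m-accretive realisation of $A_\lambda^\mu$ on $V'$ has domain $D(A_\lambda)=\{x\in L^2(\mathcal{O}):\widetilde{\Psi}_\lambda(x)\in V\}$, which collapses to $V=H^1(\mathcal{O})$ because $\widetilde{\Psi}_\lambda$ is bi-Lipschitz with $\widetilde{\Psi}_\lambda(0)=0$. Pairing the difference identity against $z\in V$ in the $V'$--$V$ duality and unfolding $A_\lambda$ as in (\ref{calc}) (with $\widetilde{\Psi}_\lambda$ in place of $\Psi$) yields
\begin{equation*}
(1+\varepsilon\mu)(x_1-x_2,z)_{L^2}+\varepsilon|\nabla z|_{L^2}^2+\varepsilon\int_{\Gamma_u}\alpha z^2\,d\sigma=(y_1-y_2,z)_{L^2}+\varepsilon K\int_{\mathcal{O}}(x_1-x_2)\frac{\partial z}{\partial x_3}\,d\xi.
\end{equation*}

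I would then invoke monotonicity to get $(x_1-x_2,z)_{L^2}\geq \lambda|x_1-x_2|_{L^2}^2$; Cauchy--Schwarz together with the pointwise bound $|\partial_{x_3}z|_{L^2}\leq |\nabla z|_{L^2}$ and Young's inequality to split the $K$-cross term into a fraction of $|\nabla z|_{L^2}^2$ plus a constant multiple of $|x_1-x_2|_{L^2}^2$; and the Lipschitz estimate $|z|_{L^2}\leq(\lambda+\lambda^{-1})|x_1-x_2|_{L^2}$ followed by Young on the right-hand side. Choosing the splitting parameters small enough, and exploiting that $\mu$ has been taken large enough (as already required in the quasi-accretivity inequality \eqref{EstimQuasi}), the Dirichlet term and the excess $L^2$-term are absorbed into the left-hand side, and one is left with $|x_1-x_2|_{L^2}^2\leq C(\lambda,\mu,\varepsilon,K)|y_1-y_2|_{L^2}^2$, which is the Lipschitz property sought.

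The only real technical obstacle is the gravitational cross term weighted by $K$: it breaks the plain $L^2$-accretivity of $A_\lambda$ that would otherwise make the estimate immediate, it is the reason one must test against $z=\widetilde{\Psi}_\lambda(x_1)-\widetilde{\Psi}_\lambda(x_2)$ rather than directly against $x_1-x_2$ (so as to generate the $|\nabla z|_{L^2}^2$ that absorbs the cross term), and it forces the resulting Lipschitz constant to depend on $\lambda$, $\mu$, $\varepsilon$ and $K$. The strong $\lambda$-coercivity built in by the $\lambda I$ shift inside $\widetilde{\Psi}_\lambda$, combined with the freedom to enlarge $\mu$, is precisely what allows the absorption to close.
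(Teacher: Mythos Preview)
Your proposal is correct and follows essentially the same route as the paper: subtract the resolvent identities, test against $z=\widetilde{\Psi}_\lambda(x_1)-\widetilde{\Psi}_\lambda(x_2)\in V$, exploit the $\lambda$-strong monotonicity of $\widetilde{\Psi}_\lambda$ to extract $\lambda|x_1-x_2|_{L^2}^2$, absorb the $K$-cross term into the Dirichlet energy via Young, and close for $\mu$ large (or, as the paper also notes, for $\varepsilon$ small). The paper's write-up differs only in the cosmetic choice of Young parameters and in giving an explicit threshold like $\mu>\frac{K^2+1}{\lambda^3}$.
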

We move the proof to the Appendix for our readers' sake.\\ 

\noindent Consequently $A_{\lambda }^{\mu ,\varepsilon }=\frac{1}{\varepsilon }\left(
I-J_{\lambda }^{\mu, \varepsilon }\right) $ is Lipschitz continuous in $%
L^{2}\left( \mathcal{O}\right) $ and, therefore, by standard existence
theory for stochastic PDEs, for each $x\in L^{2}\left( \mathcal{O}\right) $
the equation (\ref{aprox2}) has a unique solution $X_{\lambda }^{\varepsilon
}$ in $C_{W}\left( \left[ 0,T\right] ;L^{2}\left( \Omega ;L^{2}\left( 
\mathcal{O}\right) \right) \right) $ (see e.g. \cite{DPZ}).

\bigskip

\noindent \textbf{Step II} (convergence in $\varepsilon $). In this step, the $\varepsilon$ parameter is allowed to vanish, i.e., $\varepsilon \rightarrow 0$ and we look into the behavior of the limiting object.\\

\noindent To this purpose, the solution of the approximating
equation is written as 
\begin{equation}\label{ochi1}\begin{split}
&\left\langle X_{\lambda }^{\varepsilon }\left( t\right)
,e_{j}\right\rangle _{V^{\prime }}+\int_{0}^{t}\left\langle A_{\lambda
}^{\mu ,\varepsilon }\left( X_{\lambda }^{\varepsilon }\left( r\right)
\right) ,e_{j}\right\rangle _{V^{\prime }}dr \\
&-\mu \int_{0}^{t}\left\langle X_{\lambda }^{\varepsilon }\left( r\right)
,e_{j}\right\rangle _{V^{\prime }}dr+\int_{0}^{t}\left\langle F_{u}\left(
r\right) ,e_{j}\right\rangle _{V^{\prime }}dr+\int_{0}^{t}\left\langle
F_{s}\left( r\right) ,e_{j}\right\rangle _{V^{\prime }}dr\\
=&\left\langle x,e_{j}\right\rangle _{V^{\prime }}+\sum_{k=1}^{\infty }\mu
_{k}\int_{0}^{t}\left\langle X_{\lambda }^{\varepsilon }\left( r\right)
e_{k},e_{j}\right\rangle _{V^{\prime }}d\beta _{k}\left( r\right) ,\quad
\forall t\in \left[ 0,T\right] ,
\end{split}\end{equation}%
$\mathbb{P}$-a.s., and for all $e_{j}$ from the orthonormal basis formed by
the eigen-functions of the Robin-Laplace operator defined in (\ref{neumann}%
).

Simple computations yield, as before,
\begin{eqnarray*}
\left\langle A_{\lambda }^{\mu ,\varepsilon }\left( X_{\lambda
}^{\varepsilon }\right) ,e_{j}\right\rangle _{V^{\prime }}&=
&\left\langle \mu J_{\lambda }^{\mu, \varepsilon }\left( X_{\lambda
}^{\varepsilon }\right) ,e_{j}\right\rangle _{V^{\prime }}+\left\langle
A_{\lambda }\left( J_{\lambda }^{\mu, \varepsilon }\left( X_{\lambda
}^{\varepsilon }\right) \right) ,e_{j}\right\rangle _{V^{\prime }}\medskip \\
&=&\left\langle \mu J_{\lambda }^{\mu, \varepsilon }\left( X_{\lambda
}^{\varepsilon }\right) ,e_{j}\right\rangle _{V^{\prime }}+\int\limits_{%
\mathcal{O}}\widetilde{\Psi }_{\lambda }\left( J_{\lambda }^{\mu, \varepsilon
}\left( X_{\lambda }^{\varepsilon }\right) \right) e_{j}d\xi -\int\limits_{%
\mathcal{O}}KJ_{\lambda }^{\mu, \varepsilon }\left( X_{\lambda }^{\varepsilon
}\right) \frac{\partial \varphi _{j}}{\partial x_{3}}d\xi.
\end{eqnarray*}%
where $\varphi _{j}$ satisfies \eqref{aprox}.
The solution (\ref{ochi1}) of the approximating equation is rewritten as
follows

\begin{equation}\label{ochi2}\begin{split}
&\left\langle X_{\lambda }^{\varepsilon }\left( t\right)
,e_{j}\right\rangle _{V^{\prime }}+\mu \int_{0}^{t}\left\langle J_{\lambda
}^{\mu,\varepsilon }\left( X_{\lambda }^{\varepsilon }\left( r\right) \right)
,e_{j}\right\rangle _{V^{\prime }}dr \\
&+\lambda \int_{0}^{t}\int\limits_{\mathcal{O}}J_{\lambda }^{\mu,\varepsilon
}\left( X_{\lambda }^{\varepsilon }\left( r\right) \right) e_{j}d\xi
dr+\int_{0}^{t}\int\limits_{\mathcal{O}}\Psi _{\lambda }\left( J_{\lambda
}^{\mu,\varepsilon }\left( X_{\lambda }^{\varepsilon }\left( r\right) \right)
\right) e_{j}d\xi dr\\
&-K\int_{0}^{t}\int\limits_{\mathcal{O}}J_{\lambda }^{\mu, \varepsilon }\left(
X_{\lambda }^{\varepsilon }\left( r\right) \right) \frac{\partial \varphi
_{j}}{\partial x_{3}}d\xi dr-\mu \int_{0}^{t}\left\langle X_{\lambda
}^{\varepsilon }\left( r\right) ,e_{j}\right\rangle _{V^{\prime }}dr\\
&+\int_{0}^{t}\left\langle F_{u}\left( r\right) ,e_{j}\right\rangle
_{V^{\prime }}dr+\int_{0}^{t}\left\langle F_{s}\left( r\right)
,e_{j}\right\rangle _{V^{\prime }}dr\\
=&\left\langle x,e_{j}\right\rangle _{V^{\prime }}+\sum_{k=1}^{\infty }\mu
_{k}\int_{0}^{t}\left\langle X_{\lambda }^{\varepsilon }\left( r\right)
e_{k},e_{j}\right\rangle _{V^{\prime }}d\beta _{k}\left( r\right) ,\quad
\forall t\in \left[ 0,T\right] .  \end{split}
\end{equation}

Passing to the limit relies on the following preliminary
results.
\begin{lemma}\label{lema3}
For $x\in L^{2}\left( \mathcal{O}\right) $the following convergence results hold true as $\varepsilon\rightarrow 0$.
\begin{eqnarray*}
X_{\lambda }^{\varepsilon } &\rightharpoonup &X_{\lambda }\text{ weakly in }%
L^{\infty }\left( 0,T;L^{2}\left( \Omega ;L^{2}\left( \mathcal{O}\right)
\right) \right), \\
X_{\lambda }^{\varepsilon } &\rightarrow &X_{\lambda }\text{ strongly in }%
L^{\infty }\left( 0,T;L^{2}\left( \Omega ;V^{\prime }\right) \right), \\
\widetilde{\Psi }_{\lambda }\left( J_{\lambda }^{\mu, \varepsilon }\left(
X_{\lambda }^{\varepsilon }\right) \right) &\rightarrow &\eta \text{
strongly in }L^{2}\left( 0,T;L^{2}\left( \Omega ;L^{2}\left( \mathcal{O}%
\right) \right) \right),
\end{eqnarray*}%
where $X_{\lambda }$ is a solution to the equation (\ref{aprox1}). Furthermore, for a constant $C(\lambda)$ independent of $\varepsilon>0$,
\begin{equation}
\mathbb{E}\int_{0}^{T}\left\vert A_{\lambda }^{\mu ,\varepsilon }\left(
X_{\lambda }^{\varepsilon }\left( r\right) \right) \right\vert _{V^{\prime
}}^{2}dr\leq C\left( \lambda \right) .  \label{A}
\end{equation}
\end{lemma}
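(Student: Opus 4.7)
The plan is to derive uniform-in-$\varepsilon$ bounds via Itô's formula in $V'$, to extract strong convergence in $V'$ along a Cauchy-sequence argument, and to identify the weak $L^2$ limits through the monotonicity and Lipschitz character of $\widetilde{\Psi}_\lambda$. Concretely, I would apply Itô's formula to $\lvert X_\lambda^\varepsilon(t)\rvert_{V'}^2$ along \eqref{aprox2}, using the Yosida identity
\[
\bigl\langle A_\lambda^{\mu,\varepsilon}(x),x\bigr\rangle_{V'} = \bigl\langle A_\lambda^\mu(J_\lambda^{\mu,\varepsilon}x), J_\lambda^{\mu,\varepsilon}x\bigr\rangle_{V'} + \varepsilon\,\bigl\lvert A_\lambda^{\mu,\varepsilon}(x)\bigr\rvert_{V'}^2,
\]
combined with the quasi-accretivity \eqref{EstimQuasi} applied to $A_\lambda^\mu$ (whose $L^2$-monotonicity constant is at least $\lambda$ due to the $\lambda I$ term in $\widetilde{\Psi}_\lambda$). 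For $\mu=\mu(\lambda)$ sufficiently large the $K$-cross-term in \eqref{EstimQuasi} is absorbed; together with Cauchy-Schwarz on $\bigl\langle F_u+F_s, X_\lambda^\varepsilon\bigr\rangle_{V'}$, the Hilbert-Schmidt bound of assumption~5, and Gronwall's lemma, this produces, uniformly in $\varepsilon$, bounds on $\sup_{t}\mathbb{E}\lvert X_\lambda^\varepsilon\rvert_{V'}^2$, on $\lambda\,\mathbb{E}\int_0^T\lvert J_\lambda^{\mu,\varepsilon}(X_\lambda^\varepsilon)\rvert_{L^2}^2 dr$, and on $\varepsilon\,\mathbb{E}\int_0^T\lvert A_\lambda^{\mu,\varepsilon}(X_\lambda^\varepsilon)\rvert_{V'}^2 dr$.

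\textbf{Convergences and estimate \eqref{A}.} To prove the Cauchy property in $L^\infty(0,T;L^2(\Omega;V'))$, I would apply Itô's formula to $\lvert X_\lambda^\varepsilon - X_\lambda^{\varepsilon'}\rvert_{V'}^2$; writing $Y^\bullet := J_\lambda^{\mu,\bullet}(X_\lambda^\bullet)$, the cross-term arising from $A_\lambda^{\mu,\varepsilon}(X^\varepsilon) - A_\lambda^{\mu,\varepsilon'}(X^{\varepsilon'})$ decomposes into the nonnegative accretive pairing $\bigl\langle A_\lambda^\mu(Y^\varepsilon) - A_\lambda^\mu(Y^{\varepsilon'}), Y^\varepsilon - Y^{\varepsilon'}\bigr\rangle_{V'}$ plus terms of order $\varepsilon + \varepsilon'$ bounded via the first step, so $\{X_\lambda^\varepsilon\}$ is Cauchy with limit $X_\lambda$. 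The relation $X_\lambda^\varepsilon - Y^\varepsilon = \varepsilon A_\lambda^{\mu,\varepsilon}(X_\lambda^\varepsilon)$, together with the $\varepsilon$-weighted $V'$-bound, shows $Y^\varepsilon \to X_\lambda$ in $L^2(\Omega\times(0,T);V')$; combined with the uniform $L^2(\mathcal{O})$-bound on $Y^\varepsilon$ and Banach-Alaoglu, we get the weak-$*$ convergence $X_\lambda^\varepsilon \rightharpoonup X_\lambda$ in $L^\infty(0,T;L^2(\Omega;L^2(\mathcal{O})))$. For estimate \eqref{A}, I would observe that for fixed $\lambda>0$ the function $\widetilde{\Psi}_\lambda$ is Lipschitz, so elliptic-type bounds applied to the resolvent equation $Y^\varepsilon + \varepsilon A_\lambda^\mu(Y^\varepsilon) = X_\lambda^\varepsilon$ transfer the $L^2(\mathcal{O})$-control on $Y^\varepsilon$ into a $\lambda$-dependent $V'$-bound on $A_\lambda^\mu(Y^\varepsilon) = A_\lambda^{\mu,\varepsilon}(X_\lambda^\varepsilon)$, yielding \eqref{A}.

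\textbf{Identification of the limits and main obstacle.} From $Y^\varepsilon \to X_\lambda$ strongly in $V'$, the weak $L^2$-convergence of $Y^\varepsilon$, and the Lipschitz continuity of $\widetilde{\Psi}_\lambda$, one identifies the third limit as $\eta = \widetilde{\Psi}_\lambda(X_\lambda)$ in $L^2(\Omega\times(0,T)\times\mathcal{O})$; passing $\varepsilon \downarrow 0$ in \eqref{ochi2} then shows that $X_\lambda$ solves \eqref{aprox1}. I expect the main obstacle to be the extraction of the $L^2(\mathcal{O})$-information, since the boundary forcings $F_u, F_s$ live genuinely in $V'$ and not in $L^2(\mathcal{O})$, precluding a direct application of Itô's formula in the $L^2$-norm; the only viable route is through the coercive $\lambda\lvert Y^\varepsilon\rvert_{L^2}^2$ term extracted from accretivity in the $V'$-setting, and rebalancing the $\mu X_\lambda^\varepsilon$ present in \eqref{aprox2} against the $\mu\lvert Y^\varepsilon\rvert_{V'}^2$ coming from $A_\lambda^\mu$ is precisely what forces the choice $\mu = \mu(\lambda)$ highlighted by the authors prior to Step III.
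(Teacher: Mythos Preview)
Your ordering is the main problem: you run the Cauchy argument in $V'$ \emph{before} securing estimate \eqref{A}, but the remainder in that argument is precisely
\[
(\varepsilon+\varepsilon')\,\mathbb{E}\int_0^T\Bigl(\bigl\lvert A_\lambda^{\mu,\varepsilon}(X_\lambda^\varepsilon)\bigr\rvert_{V'}^2 + \bigl\lvert A_\lambda^{\mu,\varepsilon'}(X_\lambda^{\varepsilon'})\bigr\rvert_{V'}^2\Bigr)\,dr.
\]
The only control your ``first step'' supplies is the $\varepsilon$-weighted bound $\varepsilon\,\mathbb{E}\int\lvert A_\lambda^{\mu,\varepsilon}\rvert_{V'}^2 \le C$, which makes this remainder $O(1)$, not $o(1)$; the Cauchy property therefore does not follow. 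The paper establishes \eqref{A} first (its Step~3) and only then runs the Cauchy argument (Step~4).

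The deeper gap is your route to \eqref{A}. The paper's inequality \eqref{estt} shows that $\lvert A_\lambda(Y^\varepsilon)\rvert_{V'}^2$ is controlled by $C\int_{\mathcal{O}}\lvert\nabla\widetilde{\Psi}_\lambda(Y^\varepsilon)\rvert^2\,d\xi + C(\lambda)\lvert Y^\varepsilon\rvert_{L^2}^2$, so $L^2$-control on $Y^\varepsilon$ alone is not enough: a uniform-in-$\varepsilon$ gradient bound on $\widetilde{\Psi}_\lambda(Y^\varepsilon)$ is required, and $A_\lambda$ being a second-order operator means no such bound follows from $Y^\varepsilon\in L^2$. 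Testing the resolvent equation $Y^\varepsilon + \varepsilon A_\lambda^\mu(Y^\varepsilon) = X_\lambda^\varepsilon$ against any element of $V$ produces the gradient term multiplied by $\varepsilon$, so ``elliptic-type bounds on the resolvent equation'' cannot remove the degeneracy. The paper instead applies It\^{o}'s formula to $\lvert X_\lambda^\varepsilon\rvert_{L^2}^2$ (its Step~2, inequality \eqref{Ldoi}), obtaining an $\varepsilon$-free bound on $\mathbb{E}\int_0^T {}_{V'}\bigl\langle A_\lambda(Y^\varepsilon), Y^\varepsilon\bigr\rangle_V\,dr$; since this pairing dominates $c\int\lvert\nabla\widetilde{\Psi}_\lambda(Y^\varepsilon)\rvert^2$ by \eqref{calc5}--\eqref{estim+}, it is exactly the missing ingredient. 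Your diagnosis that the $L^2$ It\^{o} formula is ``precluded'' by $F_u, F_s\in V'\setminus L^2$ is what leads you astray: the paper does use it, reading the $F$-pairings in the $V$--$V'$ duality, and this same step also provides the $L^\infty$-in-time $L^2(\mathcal{O})$ bound on $X_\lambda^\varepsilon$ underlying the first weak convergence---which your $V'$-only approach does not deliver either.
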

To facilitate the reading, the proof of the Lemma is postponed after the Theorem.

In order to pass to the limit in (\ref{ochi2}) we have first to note that 
\begin{equation*}
\left\vert X_{\lambda }^{\varepsilon }-J_{\lambda }^{\mu, \varepsilon }\left(
X_{\lambda }^{\varepsilon }\right) \right\vert _{V^{\prime }}^{2}\leq
\varepsilon \left\vert A_{\lambda }^{\mu ,\varepsilon }\left( X_{\lambda
}^{\varepsilon }\right) \right\vert _{V^{\prime }}^{2}
\end{equation*}%
which implies, by (\ref{A}), that 
\begin{equation*}
J_{\lambda }^{\mu, \varepsilon }\left( X_{\lambda }^{\varepsilon }\right)
\rightarrow X_{\lambda }\text{ strongly in }L^{2}\left( 0,T;L^{2}\left(
\Omega ;V^{\prime }\right) \right) .
\end{equation*}
On the other hand, one proves that%
\begin{equation*}
J_{\lambda }^{\mu, \varepsilon }\left( X_{\lambda }^{\varepsilon }\right)
\longrightarrow X_{\lambda }\text{ strongly in }L^{2}\left( 0,T;L^{2}\left(
\Omega ;L^{2}\left( \mathcal{O}\right) \right) \right),
\end{equation*}%
see (\ref{est7}) in the proof of Lemma \ref{lema3}.

Finally, since the map $\Psi _{\lambda }$ is maximal monotone in $%
L^{2}\left( 0,T;L^{2}\left( \Omega ;L^{2}\left( \mathcal{O}\right) \right)
\right) ,$ it is weakly-strongly closed and, therefore,%
\begin{equation*}
\eta =\Psi _{\lambda }\left( X_{\lambda }\right) .
\end{equation*}

One can now pass to the limit in (\ref{ochi2}) to get%
\begin{equation}\label{ochi3}\begin{split}
&\left\langle X_{\lambda }\left( t\right) ,e_{j}\right\rangle _{V^{\prime }}+\lambda \int_{0}^{t}\int\limits_{\mathcal{O}}X_{\lambda }\left( r\right)
e_{j}d\xi dr+\int_{0}^{t}\int\limits_{\mathcal{O}}\Psi _{\lambda }\left(
X_{\lambda }\left( r\right) \right) e_{j}d\xi dr\\
&-K\int_{0}^{t}\int\limits_{\mathcal{O}}X_{\lambda }\left( r\right) \frac{%
\partial \varphi _{j}}{\partial x_{3}}d\xi dr+\int_{0}^{t}\left\langle
F_{u}\left( r\right) ,e_{j}\right\rangle _{V^{\prime
}}dr+\int_{0}^{t}\left\langle F_{s}\left( r\right) ,e_{j}\right\rangle
_{V^{\prime }}dr\\
=&\left\langle x,e_{j}\right\rangle _{V^{\prime }}+\sum_{k=1}^{\infty }\mu
_{k}\int_{0}^{t}\left\langle X_{\lambda }\left( r\right)
e_{k},e_{j}\right\rangle _{V^{\prime }}d\beta _{k}\left( r\right) ,\quad
\forall t\in \left[ 0,T\right] ,
\end{split}\end{equation}%
where $\varphi _{j}$ satisfies \eqref{aprox}.\\
At this point, we emphasize, once again, that $\mu$ plays a purely fictitious part in the sense that the equation is the same with some $\mu'=0$ and, from now on, we concentrate on this latter formulation.\\

\noindent \textbf{Step III a.} (weak convergences as $\lambda\rightarrow 0 $)\\
In order to pass to the limit for $\lambda \rightarrow 0$, one applies It\^{o}'s formula with the function $\abs{\cdot}_{V'}$, on $[0,t]$ to the $V^{\prime }$-valued It\^{o} process $X_\lambda$, and get 
\begin{eqnarray*}
&&\mathbb{E}\frac{1}{2}\left\vert X_{\lambda }\left( t\right) \right\vert
_{V^{\prime }}^{2}+\lambda \mathbb{E}\int_{0}^{t}\left\vert X_{\lambda
}\left( r\right) \right\vert _{V^{\prime }}^{2}dr+\mathbb{E}%
\int_{0}^{t}\int_{\mathcal{O}}\Psi _{\lambda }\left( X_{\lambda }\left(
r\right) \right) X_{\lambda }\left( r\right) d\xi dr \\
&&-K\mathbb{E}\int_{0}^{t}\int_{\mathcal{O}}X_{\lambda }\left( r\right) 
\frac{\partial \varphi _{\lambda }}{\partial x_{3}}d\xi dr+\mathbb{E}%
\int_{0}^{t}\left\langle F_{u}\left( r\right) +F_{s}\left( r\right)
,X_{\lambda }\left( r\right) \right\rangle _{V^{\prime }}dr \\
&=&\frac{1}{2}\left\vert x\right\vert _{V^{\prime }}^{2}+\mathbb{E}%
\sum_{k=1}^{\infty }\mu _{k}\int_{0}^{t}\left\vert X_{\lambda }\left(
r\right) e_{k}\right\vert _{V^{\prime }}^{2}dr,
\end{eqnarray*}%
where $\varphi _{\lambda }$ is the solution to 
\begin{equation*}
-\Delta \varphi _{\lambda }=X_{\lambda }, \textnormal{ on } \mathcal{O}, \ 
\dfrac{\partial \varphi _{\lambda }}{\partial \nu }+\alpha \varphi _{\lambda
}=0, \textnormal{ on } \Gamma _{u}, \ \dfrac{\partial \varphi _{\lambda }}{\partial \nu }=0, \textnormal{ on } \Gamma _{s}.%
\end{equation*}
Keeping in mind that, for some constant $C$ independent of $\lambda $,%
\begin{eqnarray*}
\left\vert \frac{\partial \varphi _{\lambda }}{\partial x_{3}}\right\vert
_{2} &\leq &C\left\vert X_{\lambda }\right\vert _{V^{\prime }} \textnormal{ and }
\left\vert X_{\lambda }e_{k}\right\vert _{V^{\prime }} \leq C\\pr{1+\lambda_k^{\frac{d+1}{2}}}\left\vert
X_{\lambda }\right\vert _{V^{\prime }},
\end{eqnarray*}%
(see Appendix), and due to the assumptions on $F_{u}$ and $F_{s},$\ one has
\begin{equation}\label{CC}\begin{split}
&\mathbb{E}\frac{1}{2}\left\vert X_{\lambda }\left( t\right) \right\vert
_{V^{\prime }}^{2}+\lambda \mathbb{E}\int_{0}^{t}\left\vert X_{\lambda
}\left( r\right) \right\vert _{V^{\prime }}^{2}dr+\mathbb{E}\int_{0}^{t}\int_{\mathcal{O}}\Psi _{\lambda }\left( X_{\lambda
}\left( r\right) \right) X_{\lambda }\left( r\right) d\xi dr
\\
\leq& C\pr{1+\left\vert x\right\vert _{V^{\prime }}^{2}}+C\mathbb{E}%
\int_{0}^{t}\left\vert X_{\lambda }\left( r\right) \right\vert _{V^{\prime
}}^{2}dr+\frac{C_{4}}{4}\mathbb{E}\int_{0}^{t}\left\vert X_{\lambda }\left(
r\right) \right\vert _{2}^{2}dr.  
\end{split}
\end{equation}

On the other hand owing to the assumptions on $j(\cdot)$,
\begin{eqnarray*}
&&\mathbb{E}\int_{0}^{t}\int_{\mathcal{O}}\Psi _{\lambda }\left( X_{\lambda
}\right) X_{\lambda }d\xi dr=\mathbb{E}\int_{0}^{t}\int_{\mathcal{O}}\Psi \left( J_{\lambda }\left(
X_{\lambda }\right) \right) \left( X_{\lambda }+J_{\lambda }\left(
X_{\lambda }\right) -J_{\lambda }\left( X_{\lambda }\right) \right) d\xi
dr\\
&=&\mathbb{E}\int_{0}^{t}\int_{\mathcal{O}}\Psi \left( J_{\lambda }\left(
X_{\lambda }\right) \right) J_{\lambda }\left( X_{\lambda }\right) d\xi
dr+\mathbb{E}\int_{0}^{t}\int_{\mathcal{O}%
}\Psi _{\lambda }\left( X_{\lambda }\right) \left( X_{\lambda }-J_{\lambda
}\left( X_{\lambda }\right) \right) d\xi dr\medskip \\
&\geq &\mathbb{E}\int_{0}^{t}\int_{\mathcal{O}}\left( C_3\left\vert
J_{\lambda }\left( X_{\lambda }\right) \right\vert ^{m+1}+C_{4}\left\vert
J_{\lambda }\left( X_{\lambda }\right) \right\vert ^{2}{-C_5}\right) d\xi
dr+\frac{1}{\lambda }\mathbb{E}%
\int_{0}^{t}\int_{\mathcal{O}}\left\vert \left( X_{\lambda }-J_{\lambda
}\left( X_{\lambda }\right) \right) \right\vert ^{2}d\xi dr.
\end{eqnarray*}

Going back to (\ref{CC}), one gets
\begin{eqnarray*}
&&\mathbb{E}\frac{1}{2}\left\vert X_{\lambda }\left( t\right) \right\vert
_{V^{\prime }}^{2}+\lambda \mathbb{E}\int_{0}^{t}\left\vert X_{\lambda
}\left( r\right) \right\vert _{V^{\prime }}^{2}dr+\mathbb{E}\int_{0}^{t}\int_{\mathcal{O}}\left( C_{3}\left\vert J_{\lambda
}\left( X_{\lambda }\right) \right\vert ^{m+1}+C_{4}\left\vert J_{\lambda
}\left( X_{\lambda }\right) \right\vert ^{2}-C_5\right) d\xi dr\medskip \\
&&+\frac{1}{\lambda }\mathbb{E}\int_{0}^{t}\int_{\mathcal{O}}\left\vert
X_{\lambda }-J_{\lambda }\left( X_{\lambda }\right) \right\vert ^{2}d\xi
dr\medskip \\
&\leq &C\pr{1+\left\vert x\right\vert _{V^{\prime }}^{2}}+C\mathbb{E}%
\int_{0}^{t}\left\vert X_{\lambda }\left( r\right) \right\vert _{V^{\prime
}}^{2}dr \\
&&+\frac{C_{4}}{2}\mathbb{E}\int_{0}^{t}\int_{\mathcal{O}}\left\vert
X_{\lambda }\left( r\right) -J_{\lambda }\left( X_{\lambda }\right)
\right\vert ^{2}d\xi dr+\frac{C_{4}}{2}\mathbb{E}\int_{0}^{t}\int_{\mathcal{O%
}}\left\vert J_{\lambda }\left( X_{\lambda }\right) \right\vert ^{2}d\xi dr.
\end{eqnarray*}

For $\lambda $ sufficiently small and owing to Gronwall's inequality,
\begin{eqnarray*}
&&\mathbb{E}\frac{1}{2}\left\vert X_{\lambda }\left( t\right) \right\vert
_{V^{\prime }}^{2}+\lambda \mathbb{E}\int_{0}^{t}\left\vert X_{\lambda
}\left( r\right) \right\vert _{V^{\prime }}^{2}dr+\mathbb{E}\int_{0}^{t}\int_{\mathcal{O}}\left( C_{3}\left\vert J_{\lambda
}\left( X_{\lambda }\right) \right\vert ^{m+1}+\frac{C_{4}}{2}\left\vert
J_{\lambda }\left( X_{\lambda }\right) \right\vert ^{2}\right) d\xi
dr\medskip  \\
&&+\left( \frac{1}{\lambda }-\frac{C_{4}}{2}\right) \mathbb{E}%
\int_{0}^{t}\int_{\mathcal{O}}\left\vert X_{\lambda }-J_{\lambda }\left(
X_{\lambda }\right) \right\vert ^{2}d\xi dr\leq C\pr{1+\left\vert x\right\vert_{V^{\prime }}^{2}},
\end{eqnarray*}%
and, therefore,%
\begin{eqnarray*}
&&\left\{ X_{\lambda }\right\} \text{ is bounded in }L^{\infty }\left(
0,T;L^{2}\left( \Omega ;V^{\prime }\right) \right) \medskip  \\
&&\left\{ J_{\lambda }\left( X_{\lambda }\right) \right\} \text{ is bounded
in }L^{m+1}\left( \left( 0,T\right) \times \Omega \times \mathcal{O}\right) .
\end{eqnarray*}
From the assumptions on the upper bounds on $\Psi $,
\begin{equation*}
\left\{ \Psi \left( J_{\lambda }\left( X_{\lambda }\right) \right) \right\} 
\text{ is bounded in }L^{\frac{m+1}{m}}\left( \left( 0,T\right) \times
\Omega \times \mathcal{O}\right) .
\end{equation*}
From the upper bounds on $\mathbb{E}%
\int_{0}^{t}\int_{\mathcal{O}}\left\vert X_{\lambda }-J_{\lambda }\left(
X_{\lambda }\right) \right\vert ^{2}d\xi dr$, one deduces that
\begin{equation*}
\left\{ X_{\lambda }\right\} \text{ is bounded in }L^{2}\left( \left(
0,T\right) \times \Omega \times \mathcal{O}\right) .
\end{equation*}
To summarize, one establishes the following weak convergences
\begin{eqnarray*}
X_{\lambda } &\rightharpoonup &X\text{ weakly in }L^{\infty }\left(
0,T;L^{2}\left( \Omega ;V^{\prime }\right) \right), \\
&&\quad \quad \quad \quad \text{and }L^{2}\left( \left( 0,T\right) \times
\Omega \times \mathcal{O}\right), \\
J_{\lambda }\left( X_{\lambda }\right) &\rightharpoonup &X\text{ weakly in }%
L^{m+1}\left( \left( 0,T\right) \times \Omega \times \mathcal{O}\right), \\
\Psi \left( J_{\lambda }\left( X_{\lambda }\right) \right) &\rightharpoonup
&\eta \text{ weakly in }L^{\frac{m+1}{m}}\left( \left( 0,T\right) \times
\Omega \times \mathcal{O}\right) .
\end{eqnarray*}

\noindent \textbf{Step III b.} (strong convergence in $L^{\infty }\left( 0,T;L^{2}\left( \Omega
;V^{\prime }\right) \right)$ and conclusion.)\\
In order to conclude the proof one still has to show that the limit of $\Psi
\left( J_{\lambda }\left( X_{\lambda }\right) \right) $ can be identified with $\Psi \left(
X\right) $.

\noindent Since the operator $\Psi $ is maximal monotone in the duality pair 
\begin{equation*}
\pr{L^{m+1}\left( \left( 0,T\right) \times \Omega \times \mathcal{O}\right)
, L^{\frac{m+1}{m}}\left( \left( 0,T\right) \times \Omega \times 
\mathcal{O}\right) },
\end{equation*}%
it is sufficient to show that 
\begin{equation*}
\underset{\lambda \rightarrow 0}{\lim \inf }\ \mathbb{E}\int_{0}^{t}\int_{%
\mathcal{O}}\Psi \left( J_{\lambda }\left( X_{\lambda }\right) \right)
J_{\lambda }\left( X_{\lambda }\right) d\xi dr\leq \mathbb{E}%
\int_{0}^{t}\int_{\mathcal{O}}\eta Xd\xi dr.
\end{equation*}

To this end, it suffices to show the strong convergence of $\left\{
X_{\lambda }\right\} $ in $L^{\infty }\left( 0,T;L^{2}\left( \Omega
;V^{\prime }\right) \right) .$ One employs It\^{o}'s formula for the squared norm in $V^{\prime }$ on $[0,t]$, and gets 
\begin{eqnarray*}
&&\mathbb{E}\frac{1}{2}\left\vert X_{\lambda }\left( t\right) -X_{\lambda
^{\prime }}\left( t\right) \right\vert _{V^{\prime }}^{2} +\mathbb{E}\int_{0}^{t}\left\langle \lambda X_{\lambda }\left( r\right)
-\lambda ^{\prime }X_{\lambda ^{\prime }}\left( r\right) ,X_{\lambda }\left(
r\right) -X_{\lambda ^{\prime }}\left( r\right) \right\rangle _{V^{\prime
}}dr\medskip \\
&&+\mathbb{E}\int_{0}^{t}\int_{\mathcal{O}}\left( \Psi _{\lambda }\left(
X_{\lambda }\left( r\right) \right) -\Psi _{\lambda ^{\prime }}\left(
X_{\lambda ^{\prime }}\left( r\right) \right) \right) \left( X_{\lambda
}\left( r\right) -X_{\lambda ^{\prime }}\left( r\right) \right) d\xi dr \\
&&-K\mathbb{E}\int_{0}^{t}\int_{\mathcal{O}}\left( X_{\lambda }\left(
r\right) -X_{\lambda ^{\prime }}\left( r\right) \right) \frac{\partial
\varphi _{\lambda ,\lambda ^{\prime }}}{\partial x_{3}}d\xi dr \\
&&+\mathbb{E}\int_{0}^{t}\left\langle F_{u}\left( r\right) +F_{s}\left(
r\right) ,X_{\lambda }\left( r\right) -X_{\lambda ^{\prime }}\left( r\right)
\right\rangle _{V^{\prime }}dr =\mathbb{E}\sum_{k=1}^{\infty }\mu _{k}\int_{0}^{t}\left\vert \left(
X_{\lambda }\left( r\right) -X_{\lambda ^{\prime }}\left( r\right) \right)
e_{k}\right\vert _{V^{\prime }}^{2}dr,
\end{eqnarray*}%
where $\varphi _{\lambda,\lambda' }$ is the solution to 
\begin{equation*}
-\Delta \varphi _{\lambda ,\lambda ^{\prime }}=X_{\lambda }-X_{\lambda
^{\prime }}, \textnormal{ on } \mathcal{O}, \
\dfrac{\partial \varphi _{\lambda ,\lambda ^{\prime }}}{\partial \nu }%
+\alpha \varphi _{\lambda ,\lambda ^{\prime }}=0, \textnormal{ on } \Gamma _{u}, \ 
\dfrac{\partial \varphi _{\lambda ,\lambda ^{\prime }}}{\partial \nu }=0, \textnormal{ on }
\Gamma _{s}.
\end{equation*}

By recalling that, for some constant $C$ independent of $\lambda $,
\begin{eqnarray*}
\left\vert \frac{\partial \varphi _{\lambda ,\lambda ^{\prime }}}{\partial
x_{3}}\right\vert _{2} \leq C\left\vert X_{\lambda }-X_{\lambda ^{\prime
}}\right\vert _{V^{\prime }}\textnormal{ and }
\left\vert \left( X_{\lambda }-X_{\lambda ^{\prime }}\right)
e_{k}\right\vert _{V^{\prime }} \leq C\pr{1+\lambda_k^{\frac{d+1}{2}}}\left\vert X_{\lambda }-X_{\lambda
^{\prime }}\right\vert _{V^{\prime }},
\end{eqnarray*}%
and from the assumptions on $F_{u}$ and $F_{s},$\ it follows that
\begin{equation}\label{calc9}\begin{split}
&\mathbb{E}\frac{1}{2}\left\vert X_{\lambda }\left( t\right) -X_{\lambda
^{\prime }}\left( t\right) \right\vert _{V^{\prime }}^{2}+\mathbb{E}\int_{0}^{t}\left\langle \lambda X_{\lambda }\left( r\right)
-\lambda ^{\prime }X_{\lambda ^{\prime }}\left( r\right) ,X_{\lambda }\left(
r\right) -X_{\lambda ^{\prime }}\left( r\right) \right\rangle _{V^{\prime
}}dr\\
&+\mathbb{E}\int_{0}^{t}\int_{\mathcal{O}}\left( \Psi _{\lambda }\left(
X_{\lambda }\left( r\right) \right) -\Psi _{\lambda ^{\prime }}\left(
X_{\lambda ^{\prime }}\left( r\right) \right) \right) \left( X_{\lambda
}\left( r\right) -X_{\lambda ^{\prime }}\left( r\right) \right) d\xi
dr\\
\leq &C\mathbb{E}\int_{0}^{t}\left\vert X_{\lambda }\left( r\right)
-X_{\lambda ^{\prime }}\left( r\right) \right\vert _{V^{\prime }}^{2}dr+%
\frac{C_{0}}{8}\mathbb{E}\int_{0}^{t}\left\vert X_{\lambda }\left( r\right)
-X_{\lambda ^{\prime }}\left( r\right) \right\vert _{2}^{2}dr.  \end{split}
\end{equation}
As before
\begin{eqnarray*}
&&\left( \Psi _{\lambda }\left( X_{\lambda }\right) -\Psi _{\lambda ^{\prime
}}\left( X_{\lambda ^{\prime }}\right) \right) \left( X_{\lambda
}-X_{\lambda ^{\prime }}\right) \medskip  \\
&=&\left( \Psi _{\lambda }\left( X_{\lambda }\right) -\Psi _{\lambda
^{\prime }}\left( X_{\lambda ^{\prime }}\right) \right) \left( J_{\lambda
}\left( X_{\lambda }\right) -J_{\lambda ^{\prime }}\left( X_{\lambda
^{\prime }}\right) \right) \medskip  \\
&&+\left( \Psi _{\lambda }\left( X_{\lambda }\right) -\Psi _{\lambda
^{\prime }}\left( X_{\lambda ^{\prime }}\right) \right) \left( \lambda \Psi
_{\lambda }\left( X_{\lambda }\right) -\lambda ^{\prime }\Psi _{\lambda
^{\prime }}\left( X_{\lambda ^{\prime }}\right) \right) \medskip  \\
&\geq &C_{0}\left\vert J_{\lambda }\left( X_{\lambda }\right) -J_{\lambda
^{\prime }}\left( X_{\lambda ^{\prime }}\right) \right\vert ^{2}-2\left( \lambda +\lambda ^{\prime }\right) \left( \left\vert \Psi
_{\lambda }\left( X_{\lambda }\right) \right\vert ^{2}+\left\vert \Psi
_{\lambda ^{\prime }}\left( X_{\lambda ^{\prime }}\right) \right\vert
^{2}\right) ,
\end{eqnarray*}%
and 
\begin{eqnarray*}
\left\vert X_{\lambda }\left( r\right) -X_{\lambda ^{\prime }}\left(
r\right) \right\vert ^{2} &\leq &3\left\vert X_{\lambda }-J_{\lambda }\left(
X_{\lambda }\right) \right\vert ^{2}+3\left\vert X_{\lambda ^{\prime
}}-J_{\lambda ^{\prime }}\left( X_{\lambda ^{\prime }}\right) \right\vert
^{2}+3\left\vert J_{\lambda }\left( X_{\lambda }\right) -J_{\lambda ^{\prime
}}\left( X_{\lambda ^{\prime }}\right) \right\vert ^{2}\medskip  \\
&=&3\lambda^2 \left\vert \Psi _{\lambda }\left( X_{\lambda }\right)
\right\vert ^{2}+3\pr{\lambda ^{\prime }}^2\left\vert \Psi _{\lambda ^{\prime
}}\left( X_{\lambda ^{\prime }}\right) \right\vert ^{2}+3\left\vert J_{\lambda }\left( X_{\lambda }\right) -J_{\lambda ^{\prime
}}\left( X_{\lambda ^{\prime }}\right) \right\vert ^{2}.
\end{eqnarray*}
Going back to (\ref{calc9}), one concludes, for $0<\lambda,\lambda'<1$,%
\begin{eqnarray*}
&&\mathbb{E}\frac{1}{2}\left\vert X_{\lambda }\left( t\right) -X_{\lambda
^{\prime }}\left( t\right) \right\vert _{V^{\prime }}^{2}+\mathbb{E}\int_{0}^{t}\int_{\mathcal{O}}\frac{C_{0}}{2}\left\vert
J_{\lambda }\left( X_{\lambda }\right) -J_{\lambda ^{\prime }}\left(
X_{\lambda ^{\prime }}\right) \right\vert ^{2}d\xi dr\medskip  \\
&\leq &C\mathbb{E}\int_{0}^{t}\left\vert X_{\lambda }\left( r\right)
-X_{\lambda ^{\prime }}\left( r\right) \right\vert _{V^{\prime
}}^{2}dr\medskip  \\
&&+C\pr{\lambda ^{\prime }+\lambda}\mathbb{E}\int_{0}^{t}\pp{\int_{\mathcal{O}}\pr{\left\vert
\Psi _{\lambda ^{\prime }}\left( X_{\lambda ^{\prime }}\right) \right\vert
^{2}+\left\vert
\Psi _{\lambda }\left( X_{\lambda }\right) \right\vert
^{2}}d\xi +
\left\vert X_{\lambda }\left( r\right) \right\vert _{V^{\prime
}}^{2}+\left\vert X_{\lambda ^{\prime }}\left( r\right) \right\vert
_{V^{\prime }}^{2}}dr.
\end{eqnarray*}%
and the strong convergence follows from Gronwall's inequality and the aforementioned estimates for the right-hand terms.

Finally, by the same argument as in \cite{criticality}, we get that $\eta
=\Psi \left( X\right) $ and the proof is complete.
\end{proof}\\

In the previous result, during the second Step, we have used the convergence stated in Lemma \ref{lema3}. Let us now prove those statements.\\

\begin{proof}[Proof of Lemma \ref{lema3}]
\textbf{Step 1 (estimates in $V'$).} First, we apply It\^{o}'s formula to the function $\abs{\cdot}_{V'}^2$ on $\pp{0,T}$ with the $V'$-valued diffusion $X_\lambda^\varepsilon$. We get %
\begin{eqnarray*}
&&\mathbb{E}\left\vert X_{\lambda }^{\varepsilon }\left( t\right)
\right\vert _{V^{\prime }}^{2}+2\mathbb{E}\int_{0}^{t}\left\langle
A_{\lambda }^{\mu ,\varepsilon }\left( X_{\lambda }^{\varepsilon }\left(
r\right) \right) ,X_{\lambda }^{\varepsilon }\left( r\right) \right\rangle
_{V^{\prime }}dr-2\mu \mathbb{E}\int_{0}^{t}\left\vert X_{\lambda
}^{\varepsilon }\left( r\right) \right\vert _{V^{\prime }}^{2}dr \\
&&+2\mathbb{E}\int_{0}^{t}\left\langle F_{u}\left( r\right) ,X_{\lambda
}^{\varepsilon }\left( r\right) \right\rangle_{V'} dr+2\mathbb{E}%
\int_{0}^{t}\left\langle F_{s}\left( r\right) ,X_{\lambda }^{\varepsilon
}\left( r\right) \right\rangle_{V'} dr \leq \left\vert x\right\vert _{V^{\prime }}^{2}+C\mathbb{E}%
\int_{0}^{t}\left\vert X_{\lambda }^{\varepsilon }\left( r\right)
\right\vert _{V^{\prime }}^{2}dr.
\end{eqnarray*}
On the one hand,
\begin{eqnarray*}
\left\langle A_{\lambda }^{\mu ,\varepsilon }\left( X_{\lambda
}^{\varepsilon }\right) ,X_{\lambda }^{\varepsilon }\right\rangle
_{V^{\prime }}
&=&\left\langle A_{\lambda }^{\mu }\left( J_{\lambda }^{\mu, \varepsilon }\left(
X_{\lambda }^{\varepsilon }\right) \right) ,J_{\lambda }^{\mu,\varepsilon
}\left( X_{\lambda }^{\varepsilon }\right) \right\rangle _{V^{\prime }}+%
\frac{1}{\varepsilon }\left\vert X_{\lambda }^{\varepsilon }-J_{\lambda
}^{\mu,\varepsilon }\left( X_{\lambda }^{\varepsilon }\right) \right\vert
_{V^{\prime }}^{2} \\
&=&\mu \left\vert J_{\lambda }^{\mu, \varepsilon }\left( X_{\lambda
}^{\varepsilon }\right) \right\vert _{V^{\prime }}^{2}+\left\langle
A_{\lambda }\left( J_{\lambda }^{\mu, \varepsilon }\left( X_{\lambda
}^{\varepsilon }\right) \right) ,J_{\lambda }^{\mu, \varepsilon }\left(
X_{\lambda }^{\varepsilon }\right) \right\rangle _{V^{\prime }}+\frac{1}{%
\varepsilon }\left\vert X_{\lambda }^{\varepsilon }-J_{\lambda
}^{\mu,\varepsilon }\left( X_{\lambda }^{\varepsilon }\right) \right\vert
_{V^{\prime }}^{2}\geq 0.
\end{eqnarray*}
By using Gronwall's inequality, one has%
\begin{equation}
\mathbb{E}\left\vert X_{\lambda }^{\varepsilon }\left( t\right) \right\vert
_{V^{\prime }}^{2}+2\mathbb{E}\int_{0}^{t}\mu \left\vert J_{\lambda
}^{\mu,\varepsilon }\left( X_{\lambda }^{\varepsilon }\left( r\right) \right)
\right\vert _{V^{\prime }}^{2}dr+2\mathbb{E}\int_{0}^{t}\frac{1}{\varepsilon 
}\left\vert X_{\lambda }^{\varepsilon }\left( r\right) -J_{\lambda
}^{\mu,\varepsilon }\left( X_{\lambda }^{\varepsilon }\left( r\right) \right)
\right\vert _{V^{\prime }}^{2}dr\leq C\pr{1+\abs{x}_{V'}^2},  \label{Vprim}
\end{equation}%
for $\forall t\in \left[ 0,T\right],$ where $C$ is a constant independent
of $\varepsilon $ and $\lambda $.

\textbf{Step 2 (estimates in $L^2(\mathcal{O})$).} We continue with applying It\^{o}'s formula to the function $\abs{\cdot}_2:=\abs{\cdot}_{L^{2}\pr{\mathcal{O}}}$, to the $L^{2}\left( \mathcal{O}%
\right) $-diffusion $X_\lambda^\varepsilon$ to get
\begin{equation*}\begin{split}
\mathbb{E}\left\vert X_{\lambda }^{\varepsilon }\left( t\right) \right\vert
_{2}^{2}+2\mathbb{E}\int_{0}^{t}~_{V^{\prime }}\left\langle A_{\lambda
}^{\mu ,\varepsilon }\left( X_{\lambda }^{\varepsilon }\left( r\right)
\right) ,X_{\lambda }^{\varepsilon }\left( r\right) \right\rangle
_{V}dr-2\mu \mathbb{E}\int_{0}^{t}\left\vert X_{\lambda }^{\varepsilon
}\left( r\right) \right\vert _{2}^{2}dr\\
+2\mathbb{E}\int_{0}^{t}\left\langle F_{u}\left( r\right) +F_{s}\left(
r\right) ,X_{\lambda }^{\varepsilon }\left( r\right) \right\rangle
_{2}dr\leq \left\vert x\right\vert _{2}^{2}+C\mathbb{E}\int_{0}^{t}\left%
\vert X_{\lambda }^{\varepsilon }\left( r\right) \right\vert _{2}^{2}dr.\end{split}
\end{equation*}
Taking into account that 
\begin{eqnarray*}
_{V^{\prime }}\left\langle A_{\lambda }^{\mu ,\varepsilon }\left(
X_{\lambda }^{\varepsilon }\right) ,X_{\lambda }^{\varepsilon }\right\rangle
_{V}
&=&_{V^{\prime }}\left\langle A_{\lambda }^{\mu }\left( J_{\lambda
}^{\mu,\varepsilon }\left( X_{\lambda }^{\varepsilon }\right) \right)
,J_{\lambda }^{\mu, \varepsilon }\left( X_{\lambda }^{\varepsilon }\right)
+\varepsilon A_{\lambda }^{\mu }\left( J_{\lambda }^{\mu, \varepsilon }\left(
X_{\lambda }^{\varepsilon }\right) \right) \right\rangle _{V}\medskip \\
&=&\mu \left\vert J_{\lambda }^{\mu, \varepsilon }\left( X_{\lambda
}^{\varepsilon }\right) \right\vert _{2}^{2}+_{V^{\prime }}\left\langle
A_{\lambda }\left( J_{\lambda }^{\mu, \varepsilon }\left( X_{\lambda
}^{\varepsilon }\right) \right) ,J_{\lambda }^{\mu, \varepsilon }\left(
X_{\lambda }^{\varepsilon }\right) \right\rangle _{V}+\varepsilon \left\vert
A_{\lambda }^{\mu }\left( J_{\lambda }^{\mu, \varepsilon }\left( X_{\lambda
}^{\varepsilon }\right) \right) \right\vert _{2}^{2},
\end{eqnarray*}%
Gronwall's inequality yields that 
\begin{equation}
\mathbb{E}\left\vert X_{\lambda }^{\varepsilon }\left( t\right) \right\vert
_{2}^{2}+2\mathbb{E}\int_{0}^{t}\left( \mu \left\vert J_{\lambda
}^{\mu,\varepsilon }\left( X_{\lambda }^{\varepsilon }\left( r\right) \right)
\right\vert _{2}^{2}+_{V^{\prime }}\left\langle A_{\lambda }\left(
J_{\lambda }^{\mu, \varepsilon }\left( X_{\lambda }^{\varepsilon }\left( r\right)
\right) \right) ,J_{\lambda }^{\mu, \varepsilon }\left( X_{\lambda }^{\varepsilon
}\left( r\right) \right) \right\rangle _{V}\right) dr\leq C\pr{1+\abs{x}_2^2},  \label{Ldoi}
\end{equation}%
for $\forall t\in \left[ 0,T\right] ,$ where $C$ is a constant independent
of $\varepsilon $ and $\lambda $.

\textbf{Step 3 (estimates of $\mathbb{E}\int_{0}^{t}\left\vert A_{\lambda }^{\mu ,\varepsilon }\left(
X_{\lambda }^{\varepsilon }\left( r\right) \right) \right\vert _{V^{\prime
}}^{2}dr$).}

One begins with writing
\begin{eqnarray*}
\left\vert A_{\lambda }^{\mu ,\varepsilon }\left( X_{\lambda }^{\varepsilon
}\left( r\right) \right) \right\vert _{V^{\prime }}^{2} 
&=&\left\langle \mu J_{\lambda }^{\mu, \varepsilon }\left( X_{\lambda
}^{\varepsilon }\right) +A_{\lambda }\left( J_{\lambda }^{\mu,\varepsilon
}\left( X_{\lambda }^{\varepsilon }\right) \right) ,\mu J_{\lambda
}^{\mu,\varepsilon }\left( X_{\lambda }^{\varepsilon }\right) +A_{\lambda
}\left( J_{\lambda }^{\mu, \varepsilon }\left( X_{\lambda }^{\varepsilon }\right)
\right) \right\rangle _{V^{\prime }}\medskip \\
&\leq &2\mu \left\vert J_{\lambda }^{\mu, \varepsilon }\left( X_{\lambda
}^{\varepsilon }\right) \right\vert _{V^{\prime }}^{2}+2\left\vert
A_{\lambda }\left( J_{\lambda }^{\mu, \varepsilon }\left( X_{\lambda
}^{\varepsilon }\right) \right) \right\vert _{V^{\prime }}^{2}.
\end{eqnarray*}

\noindent Furthermore,
\begin{eqnarray*}
&&\left\vert A_{\lambda }\left( J_{\lambda }^{\mu, \varepsilon }\left( X_{\lambda
}^{\varepsilon }\right) \right) \right\vert _{V^{\prime }}^{2}
=_{V^{\prime }}\left\langle A_{\lambda }\left( J_{\lambda }^{\mu, \varepsilon
}\left( X_{\lambda }^{\varepsilon }\right) \right) ,\varphi \right\rangle
_{V}\medskip \\
&=&\int_{\mathcal{O}}\left( \nabla \tilde{\Psi}_\lambda \pr{J_{\lambda }^{\mu, \varepsilon }\left( X_{\lambda
}^{\varepsilon }\right)} \cdot \nabla \varphi
-KJ_{\lambda }^{\mu, \varepsilon }\left( X_{\lambda
}^{\varepsilon }\right){\frac{\partial \varphi}{\partial x_{3}}}\right) d\xi 
+\int_{\Gamma _{u}}\alpha Tr\pr{\tilde{\Psi}_\lambda \pr{J_{\lambda }^{\mu, \varepsilon }\left( X_{\lambda
}^{\varepsilon }\right)}}
Tr\left( \varphi \right) d\sigma,
\end{eqnarray*}%
where $\varphi $ satisfies 
\begin{equation*}
-\Delta \varphi =A_{\lambda }\left( J_{\lambda }^{\mu, \varepsilon }\left(
X_{\lambda }^{\varepsilon }\right) \right) ,\textnormal{ on } \mathcal{O}, \ 
\dfrac{\partial \varphi }{\partial \nu }+\alpha \varphi =0, \textnormal{ on }\Gamma _{u},\ 
\dfrac{\partial \varphi }{\partial \nu }=0, \textnormal{ on }\Gamma _{s}.%
\end{equation*}

\noindent Green's formula gives
\begin{equation}\label{est1}
\left\vert A_{\lambda }\left( J_{\lambda }^{\mu, \varepsilon }\left( X_{\lambda
}^{\varepsilon }\right) \right) \right\vert _{V^{\prime }}^{2}\medskip
=\int_{\mathcal{O}}\widetilde{\Psi }_{\lambda }\left( J_{\lambda
}^{\mu, \varepsilon }\left( X_{\lambda }^{\varepsilon }\right) \right) A_{\lambda
}\left( J_{\lambda }^{\mu, \varepsilon }\left( X_{\lambda }^{\varepsilon }\right)
\right) d\xi -\int_{\mathcal{O}}KJ_{\lambda }^{\mu, \varepsilon }\left(
X_{\lambda }^{\varepsilon }\right) \frac{\partial \varphi }{\partial x_{3}}%
d\xi 
{=:}I_{1}+I_{2}. 
\end{equation}

By using the form of the operator $A_{\lambda }$, and the fact that
the $Tr$ function is bounded by the $H^{1}\left( \mathcal{O}\right) $ norm,
it follows that
\begin{eqnarray*}
I_{1} &=&\int_{\mathcal{O}}\widetilde{\Psi }_{\lambda }\left( J_{\lambda
}^{\mu,\varepsilon }\left( X_{\lambda }^{\varepsilon }\right) \right) A_{\lambda
}\left( J_{\lambda }^{\mu, \varepsilon }\left( X_{\lambda }^{\varepsilon }\right)
\right) d\xi \\
&=&\int_{\mathcal{O}}\left\vert \nabla \widetilde{\Psi }_{\lambda }\left(
J_{\lambda }^{\mu, \varepsilon }\left( X_{\lambda }^{\varepsilon }\right) \right)
\right\vert ^{2}d\xi -\int_{\mathcal{O}}KJ_{\lambda }^{\mu, \varepsilon }\left(
X_{\lambda }^{\varepsilon }\right) \frac{\partial }{\partial x_{3}}%
\widetilde{\Psi }_{\lambda }\left( J_{\lambda }^{\mu, \varepsilon }\left(
X_{\lambda }^{\varepsilon }\right) \right) d\xi  \\&&+\int_{\Gamma
_{u}}\alpha \left\vert Tr\left( \widetilde{\Psi }_{\lambda }\left(
J_{\lambda }^{\mu, \varepsilon }\left( X_{\lambda }^{\varepsilon }\right) \right)
\right) \right\vert ^{2}d\sigma \\
&\leq &C\int_{\mathcal{O}}\left\vert \nabla \widetilde{\Psi }_{\lambda
}\left( J_{\lambda }^{\mu, \varepsilon }\left( X_{\lambda }^{\varepsilon }\right)
\right) \right\vert ^{2}d\xi +C\int_{\mathcal{O}}\left\vert J_{\lambda
}^{\mu,\varepsilon }\left( X_{\lambda }^{\varepsilon }\right) \right\vert
^{2}d\xi +\alpha
_{M}\left\vert \widetilde{\Psi }_{\lambda }\left( J_{\lambda }^{\mu,\varepsilon
}\left( X_{\lambda }^{\varepsilon }\right) \right) \right\vert _{H^{1}\left( 
\mathcal{O}\right) }^{2}.
\end{eqnarray*}

\noindent Since $\widetilde{\Psi }_{\lambda }$ is assumed to be Lipschitz 
\begin{eqnarray*}
&&\int_{\mathcal{O}}\widetilde{\Psi }_{\lambda }\left( J_{\lambda
}^{\mu,\varepsilon }\left( X_{\lambda }^{\varepsilon }\right) \right) A_{\lambda
}\left( J_{\lambda }^{\mu, \varepsilon }\left( X_{\lambda }^{\varepsilon }\right)
\right) d\xi \leq C\int_{\mathcal{O}}\left\vert \nabla \widetilde{\Psi }_{\lambda
}\left( J_{\lambda }^{\mu, \varepsilon }\left( X_{\lambda }^{\varepsilon }\right)
\right) \right\vert ^{2}d\xi +C\left( \lambda \right) \int_{\mathcal{O}%
}\left\vert J_{\lambda }^{\mu, \varepsilon }\left( X_{\lambda }^{\varepsilon
}\right) \right\vert ^{2}d\xi.
\end{eqnarray*}

\noindent On the other hand, using arguments in the spirit of \eqref{eess},
\begin{eqnarray*}
I_{2} &=&-K\int_{\mathcal{O}}J_{\lambda }^{\mu, \varepsilon }\left( X_{\lambda
}^{\varepsilon }\right) \frac{\partial \varphi }{\partial x_{3}}d\xi\leq C\int_{\mathcal{O}}\left\vert J_{\lambda }^{\mu, \varepsilon }\left(
X_{\lambda }^{\varepsilon }\right) \right\vert ^{2}d\xi +\frac{1}{2}%
\left\vert A_{\lambda }\left( J_{\lambda }^{\mu, \varepsilon }\left( X_{\lambda
}^{\varepsilon }\right) \right) \right\vert _{V^{\prime }}^{2}.
\end{eqnarray*}

By replacing $I_{1}$ and $I_{2}$ in (\ref{est1}), we get%
\begin{equation}
\frac{1}{2}\left\vert A_{\lambda }\left( J_{\lambda }^{\mu, \varepsilon }\left(
X_{\lambda }^{\varepsilon }\right) \right) \right\vert _{V^{\prime
}}^{2}\leq C\int_{\mathcal{O}}\left\vert \nabla \widetilde{\Psi }_{\lambda
}\left( J_{\lambda }^{\mu, \varepsilon }\left( X_{\lambda }^{\varepsilon }\right)
\right) \right\vert ^{2}d\xi +C\left( \lambda \right) \int_{\mathcal{O}%
}\left\vert J_{\lambda }^{\mu, \varepsilon }\left( X_{\lambda }^{\varepsilon
}\right) \right\vert ^{2}d\xi.  \label{estt}
\end{equation}

Since $\mathbb{E}\int_{0}^{t}\int_{\mathcal{O}}\left\vert J_{\lambda
}^{\mu,\varepsilon }\left( X_{\lambda }^{\varepsilon }\right) \right\vert
^{2}d\xi $ is bounded uniformly in $\varepsilon ,$ it is now sufficient to
bound the first term on the right-hand of \eqref{estt}.

First, the reader is invited to note that
\begin{equation}\label{calc5}\begin{split}
&\int_{\mathcal{O}}\nabla \widetilde{\Psi }_{\lambda }\left(
J_{\lambda }^{\mu, \varepsilon }\left( X_{\lambda }^{\varepsilon }\right) \right)\ \cdot\ \nabla J_{\lambda }^{\mu, \varepsilon }\left( X_{\lambda }^{\varepsilon }\right)
d\xi =\int_{\mathcal{O}}\nabla \widetilde{\Psi }_{\lambda }\left(
J_{\lambda }^{\mu, \varepsilon }\left( X_{\lambda }^{\varepsilon }\right) \right) \ \cdot\ \nabla \widetilde{\Psi }_{\lambda }^{-1}\left( \widetilde{\Psi }_{\lambda
}\left( J_{\lambda }^{\mu, \varepsilon }\left( X_{\lambda }^{\varepsilon }\right)
\right) \right) d\xi \\
&=\int_{\mathcal{O}}\left\vert \nabla \widetilde{\Psi }_{\lambda }\left(
J_{\lambda }^{\mu, \varepsilon }\left( X_{\lambda }^{\varepsilon }\right) \right)
\right\vert ^{2}\left( \widetilde{\Psi }_{\lambda }^{-1}\right) ^{\prime
}\pr{J_{\lambda }^{\mu, \varepsilon }\left( X_{\lambda }^{\varepsilon }\right) }d\xi \geq c\int_{\mathcal{O}}\left\vert \nabla \widetilde{\Psi }_{\lambda
}\left( J_{\lambda }^{\mu, \varepsilon }\left( X_{\lambda }^{\varepsilon }\right)
\right) \right\vert ^{2}d\xi ,
\end{split}\end{equation}
because $\tilde{\Psi}_{\lambda }^{-1}$ is strictly increasing which
follows from the fact that $\widetilde{\Psi }_{\lambda }$ is Lipschitz-continuous. The constant $c>0$ is generic, but we write it as a lower case since we will be employing it hereafter.\\
\noindent The definition of $A_\lambda$ yields
\begin{eqnarray*}
&&\int_{\mathcal{O}}\nabla \widetilde{\Psi }_{\lambda }\left(
J_{\lambda }^{\mu, \varepsilon }\left( X_{\lambda }^{\varepsilon }\right) \right)
\cdot \nabla J_{\lambda }^{\mu, \varepsilon }\left( X_{\lambda }^{\varepsilon }\right)
d\xi\\ &\leq& _{V^{\prime }}\left\langle A_{\lambda }\left( J_{\lambda
}^{\mu,\varepsilon }\left( X_{\lambda }^{\varepsilon }\right) \right), J_{\lambda }^{\mu, \varepsilon }\left( X_{\lambda }^{\varepsilon }\right)
\right\rangle _{V}+K\int_{\mathcal{O}}J_{\lambda }^{\mu, \varepsilon }\left(
X_{\lambda }^{\varepsilon }\right) \frac{\partial }{\partial x_{3}}%
J_{\lambda }^{\mu, \varepsilon }\left( X_{\lambda }^{\varepsilon }\right) d\xi\\
&&-\int_{\Gamma _{u}}\alpha Tr\pr{\widetilde{\Psi }_{\lambda }\left( J_{\lambda
}^{\mu,\varepsilon }\left( X_{\lambda }^{\varepsilon }\right) \right) }Tr\pr{J_{\lambda
}^{\mu,\varepsilon }\left( X_{\lambda }^{\varepsilon }\right)} d\sigma\\
&\leq& _{V^{\prime }}\left\langle A_{\lambda }\left( J_{\lambda
}^{\mu,\varepsilon }\left( X_{\lambda }^{\varepsilon }\right) \right)
,J_{\lambda }^{\mu, \varepsilon }\left( X_{\lambda }^{\varepsilon }\right)
\right\rangle _{V}+K\int_{\mathcal{O}}J_{\lambda }^{\mu, \varepsilon }\left(
X_{\lambda }^{\varepsilon }\right) \frac{\partial }{\partial x_{3}}%
J_{\lambda }^{\mu, \varepsilon }\left( X_{\lambda }^{\varepsilon }\right) d\xi.
\end{eqnarray*}
The latter inequality follows from the fact that 
\[Tr\pr{\widetilde{\Psi }_{\lambda }\left( J_{\lambda
}^{\mu,\varepsilon }\left( X_{\lambda }^{\varepsilon }\right) \right) }Tr\pr{J_{\lambda
}^{\mu,\varepsilon }\left( X_{\lambda }^{\varepsilon }\right)}=Tr\pr{\widetilde{\Psi }_{\lambda }\left( J_{\lambda
}^{\mu,\varepsilon }\left( X_{\lambda }^{\varepsilon }\right) \right)\pr{J_{\lambda
}^{\mu,\varepsilon }\left( X_{\lambda }^{\varepsilon }\right)}},\]where the right-hand operator acts on $W^{1,1}$, and its argument is non-negative due to the monotonicity of $\widetilde{\Psi }_{\lambda }$.
The reader is reminded that $\tilde{\Psi}_\lambda^{-1}$ is $\frac{1}{\lambda}$-Lipschitz continuous. As a consequence, for $\delta>0$, one has 
\[\abs{J_\lambda^{\mu,\varepsilon}(x)\frac{\partial }{\partial x_3}J_\lambda^{\mu,\varepsilon(x)}}\leq \delta\abs{\nabla J_\lambda^{\mu,\varepsilon}(x)}^2+\frac{1}{4\delta}\abs{J_\lambda^{\mu,\varepsilon}(x)}^2\leq C(\lambda)\delta\abs{\nabla \tilde{\Psi}_\lambda\pr{J_\lambda^{\mu,\varepsilon}(x)}}^2+\frac{1}{4\delta}\abs{J_\lambda^{\mu,\varepsilon}(x)}^2.\] Owing to \eqref{calc5}, by picking $\delta$ small enough,
\begin{equation}\label{estim+}
\begin{split}
c\int_{\mathcal{O}}\left\vert \nabla \widetilde{\Psi }_{\lambda
}\left( J_{\lambda }^{\mu, \varepsilon }\left( X_{\lambda }^{\varepsilon }\right)
\right) \right\vert ^{2}d\xi\leq &_{V^{\prime }}\left\langle A_{\lambda }\left( J_{\lambda
}^{\mu,\varepsilon }\left( X_{\lambda }^{\varepsilon }\right) \right)
,J_{\lambda }^{\mu, \varepsilon }\left( X_{\lambda }^{\varepsilon }\right)
\right\rangle _{V}+C\left( \lambda \right) \int_{\mathcal{O}}\left\vert
J_{\lambda }^{\mu, \varepsilon }\left( X_{\lambda }^{\varepsilon }\right)
\right\vert ^{2}d\xi \\
 &+\frac{c}{2}\int_{\mathcal{O}}\left\vert \nabla \widetilde{%
\Psi }_{\lambda }\left( J_{\lambda }^{\mu, \varepsilon }\left( X_{\lambda
}^{\varepsilon }\right) \right) \right\vert ^{2}d\xi .\end{split}
\end{equation}
We recall the estimate (\ref{Ldoi}) in order to obtain, from \eqref{estim+} and \eqref{estt}, the estimates \eqref{A}, i.e.,
\begin{equation*}
\mathbb{E}\int_{0}^{t}\left\vert A_{\lambda }^{\mu ,\varepsilon }\left(
X_{\lambda }^{\varepsilon }\left( r\right) \right) \right\vert _{V^{\prime
}}^{2}dr\leq C\left( \lambda \right) .
\end{equation*}
\textbf{Step 4. (strong convergences)}
In order to conclude the proof of the Lemma we shall now prove that%
\begin{equation}
X_{\lambda }^{\varepsilon }\rightarrow X_{\lambda }\text{ strongly in }%
L^{\infty }\left( 0,T;L^{2}\left( \Omega ;V^{\prime }\right) \right) .
\label{strong1}
\end{equation}%
and 
\begin{equation}
\Psi _{\lambda }\left( J_{\lambda }^{\mu, \varepsilon }\left( X_{\lambda
}^{\varepsilon }\right) \right) \rightarrow \eta \text{ strongly in }%
L^{2}\left( 0,T;L^{2}\left( \Omega ;L^{2}\left( \mathcal{O}\right) \right)
\right) .  \label{psi}
\end{equation}

To this purpose, we apply It\^{o}'s formula with the $V^{\prime }$ squared norm
to $X_{\lambda }^{\varepsilon }-X_{\lambda }^{\varepsilon ^{\prime }}$ on $\pp{0,t}$.
\begin{eqnarray*}
&&\mathbb{E}\left\vert X_{\lambda }^{\varepsilon }(t)-X_{\lambda }^{\varepsilon
^{\prime }}(t)\right\vert _{V^{\prime }}^{2} +2\mathbb{E}\int_{0}^{t}\left\langle A_{\lambda }^{\mu ,\varepsilon
}\left( X_{\lambda }^{\varepsilon }\right) -A_{\lambda }^{\mu ,\varepsilon
^{\prime }}\left( X_{\lambda }^{\varepsilon ^{\prime }}\right) ,X_{\lambda
}^{\varepsilon }-X_{\lambda }^{\varepsilon ^{\prime }}\right\rangle
_{V^{\prime }}dr \\
&&-2\mu \mathbb{E}\int_{0}^{t}\left\vert X_{\lambda }^{\varepsilon
}-X_{\lambda }^{\varepsilon ^{\prime }}\right\vert _{V^{\prime }}^{2}dr+2%
\mathbb{E}\int_{0}^{t}\left\langle F_{u}+F_{s},X_{\lambda }^{\varepsilon
}-X_{\lambda }^{\varepsilon ^{\prime }}\right\rangle _{V^{\prime }}dr \leq C\mathbb{E}\int_{0}^{t}\left\vert X_{\lambda }^{\varepsilon
}-X_{\lambda }^{\varepsilon ^{\prime }}\right\vert _{V^{\prime }}^{2}dr.
\end{eqnarray*}

\noindent For the term involving the differential operators $A_\lambda^{\mu,\cdot}$, one has
\begin{equation}\label{estim++}
\begin{split}
&\left\langle A_{\lambda }^{\mu ,\varepsilon }\left( X_{\lambda
}^{\varepsilon }\right) -A_{\lambda }^{\mu ,\varepsilon ^{\prime }}\left(
X_{\lambda }^{\varepsilon ^{\prime }}\right) ,X_{\lambda }^{\varepsilon
}-X_{\lambda }^{\varepsilon ^{\prime }}\right\rangle _{V^{\prime }} \\
=&\left\langle A_{\lambda }^{\mu }\left( J_{\lambda }^{\mu, \varepsilon }\left(
X_{\lambda }^{\varepsilon }\right) \right) -A_{\lambda }^{\mu }\left(
J_{\lambda }^{\mu,\varepsilon' }\left( X_{\lambda }^{\varepsilon ^{\prime
}}\right) \right) ,J_{\lambda }^{\mu, \varepsilon }\left( X_{\lambda
}^{\varepsilon }\right) -J_{\lambda }^{\mu,\varepsilon' }\left( X_{\lambda
}^{\varepsilon ^{\prime }}\right) \right\rangle _{V^{\prime }} \\
&+\left\langle A_{\lambda }^{\mu }\left( J_{\lambda }^{\mu, \varepsilon }\left(
X_{\lambda }^{\varepsilon }\right) \right) -A_{\lambda }^{\mu }\left(
J_{\lambda }^{\mu,\varepsilon' }\left( X_{\lambda }^{\varepsilon ^{\prime
}}\right) \right) ,\varepsilon A_{\lambda }^{\mu }\left( J_{\lambda
}^{\mu,\varepsilon }\left( X_{\lambda }^{\varepsilon }\right) \right)
-\varepsilon ^{\prime }A_{\lambda }^{\mu }\left( J_{\lambda }^{\mu,\varepsilon'
}\left( X_{\lambda }^{\varepsilon ^{\prime }}\right) \right) \right\rangle
_{V^{\prime }}\\
\geq &\mu \left\vert J_{\lambda }^{\mu, \varepsilon }\left( X_{\lambda
}^{\varepsilon }\right) -J_{\lambda }^{\mu,\varepsilon ^{\prime }}\left(
X_{\lambda }^{\varepsilon ^{\prime }}\right) \right\vert _{V^{\prime }}^{2}
+\left\langle \widetilde{\Psi }_{\lambda }\left( J_{\lambda }^{\mu,\varepsilon
}\left( X_{\lambda }^{\varepsilon }\right) \right) -\widetilde{\Psi }%
_{\lambda }\left( J_{\lambda }^{\mu,\varepsilon ^{\prime }}\left( X_{\lambda
}^{\varepsilon ^{\prime }}\right) \right) ,J_{\lambda }^{\mu, \varepsilon }\left(
X_{\lambda }^{\varepsilon }\right) -J_{\lambda }^{\mu, \varepsilon }\left(
X_{\lambda }^{\varepsilon ^{\prime }}\right) \right\rangle _{2} \\
&-K\left\langle J_{\lambda }^{\mu, \varepsilon }\left( X_{\lambda }^{\varepsilon
}\right) -J_{\lambda }^{\mu,\varepsilon ^{\prime }}\left( X_{\lambda
}^{\varepsilon ^{\prime }}\right) ,\frac{\partial \varphi }{\partial x_{3}}%
\right\rangle _{2} -2\left( \varepsilon +\varepsilon ^{\prime }\right) \left( \left\vert
A_{\lambda }^{\mu ,\varepsilon }\left( X_{\lambda }^{\varepsilon }\right)
\right\vert _{V^{\prime }}^{2}+\left\vert A_{\lambda }^{\mu ,\varepsilon
}\left( X_{\lambda }^{\varepsilon ^{\prime }}\right) \right\vert _{V^{\prime
}}^{2}\right)
\end{split}
\end{equation}%
where $\varphi $ satisfies  
\begin{equation*}
-\Delta \varphi =J_{\lambda }^{\mu, \varepsilon }\left( X_{\lambda }^{\varepsilon
}\right) -J_{\lambda }^{\mu, \varepsilon }\left( X_{\lambda }^{\varepsilon
^{\prime }}\right) , \textnormal{ on } \mathcal{O},\ 
\dfrac{\partial \varphi }{\partial \nu }+\alpha \varphi =0, \textnormal{ on }\Gamma _{u},\ 
\dfrac{\partial \varphi }{\partial \nu }=0, \textnormal{ on }\Gamma _{s}.%
\end{equation*}
Going back to \eqref{estim++}, one gets
\begin{eqnarray*}
&&\mathbb{E}\left\vert X_{\lambda }^{\varepsilon }-X_{\lambda }^{\varepsilon
^{\prime }}\right\vert _{V^{\prime }}^{2}+2\mu \mathbb{E}\int_{0}^{t}\left\vert J_{\lambda }^{\mu, \varepsilon }\left(
X_{\lambda }^{\varepsilon }\right) -J_{\lambda }^{\mu,\varepsilon ^{\prime
}}\left( X_{\lambda }^{\varepsilon ^{\prime }}\right) \right\vert
_{V^{\prime }}^{2}dr +2\lambda \mathbb{E}\int_{0}^{t}\left\vert J_{\lambda }^{\mu,\varepsilon
}\left( X_{\lambda }^{\varepsilon }\right) -J_{\lambda }^{\mu,\varepsilon
^{\prime }}\left( X_{\lambda }^{\varepsilon ^{\prime }}\right) \right\vert
_{2}^{2}dr \\
&&+2\mathbb{E}\int_{0}^{t}\left\langle \Psi _{\lambda }\left( J_{\lambda
}^{\mu,\varepsilon }\left( X_{\lambda }^{\varepsilon }\right) \right) -\Psi
_{\lambda }\left( J_{\lambda }^{\mu,\varepsilon ^{\prime }}\left( X_{\lambda
}^{\varepsilon ^{\prime }}\right) \right) ,J_{\lambda }^{\mu, \varepsilon }\left(
X_{\lambda }^{\varepsilon }\right) -J_{\lambda }^{\mu,\varepsilon '}\left(
X_{\lambda }^{\varepsilon ^{\prime }}\right) \right\rangle _{2}dr \\
&&-2K\mathbb{E}\int_{0}^{t}\left\langle J_{\lambda }^{\mu, \varepsilon }\left(
X_{\lambda }^{\varepsilon }\right) -J_{\lambda }^{\mu,\varepsilon ^{\prime
}}\left( X_{\lambda }^{\varepsilon ^{\prime }}\right) ,\frac{\partial
\varphi }{\partial x_{3}}\right\rangle _{2}dr \\&&-4\left( \varepsilon +\varepsilon ^{\prime }\right) \mathbb{E}%
\int_{0}^{t}\left( \left\vert A_{\lambda }^{\mu ,\varepsilon }\left(
X_{\lambda }^{\varepsilon }\right) \right\vert _{V^{\prime }}^{2}+\left\vert
A_{\lambda }^{\mu ,\varepsilon '}\left( X_{\lambda }^{\varepsilon ^{\prime
}}\right) \right\vert _{V^{\prime }}^{2}\right) dr\leq C\mathbb{E}\int_{0}^{t}\left\vert X_{\lambda }^{\varepsilon
}-X_{\lambda }^{\varepsilon ^{\prime }}\right\vert _{V^{\prime }}^{2}dr.
\end{eqnarray*}
Since
\begin{eqnarray*}
&&-2K\left\langle J_{\lambda }^{\mu, \varepsilon }\left( X_{\lambda
}^{\varepsilon }\right) -J_{\lambda }^{\mu,\varepsilon ^{\prime }}\left(
X_{\lambda }^{\varepsilon ^{\prime }}\right) ,\frac{\partial \varphi }{%
\partial x_{3}}\right\rangle _{2}\medskip  \\&\geq& -\lambda \left\vert J_{\lambda }^{\mu, \varepsilon }\left( X_{\lambda
}^{\varepsilon }\right) -J_{\lambda }^{\mu,\varepsilon ^{\prime }}\left(
X_{\lambda }^{\varepsilon ^{\prime }}\right) \right\vert _{2}^{2}-\frac{K^{2}%
}{\lambda }\left\vert J_{\lambda }^{\mu, \varepsilon }\left( X_{\lambda
}^{\varepsilon }\right) -J_{\lambda }^{\mu,\varepsilon ^{\prime }}\left(
X_{\lambda }^{\varepsilon ^{\prime }}\right) \right\vert _{V^{\prime }}^{2},
\end{eqnarray*}%
by the same argument as in the Lemma \ref{lema1} (see \eqref{eess}), and since the strong monotonicity of $\Psi _{\lambda }^{-1}$ yields, for some generic $c(\lambda)>0$,%
\begin{eqnarray*}
&&\left\langle \Psi _{\lambda }\left( J_{\lambda }^{\mu, \varepsilon }\left(
X_{\lambda }^{\varepsilon }\right) \right) -\Psi _{\lambda }\left(
J_{\lambda }^{\mu,\varepsilon ^{\prime }}\left( X_{\lambda }^{\varepsilon
^{\prime }}\right) \right) ,J_{\lambda }^{\mu, \varepsilon }\left( X_{\lambda
}^{\varepsilon }\right) -J_{\lambda }^{\mu, \varepsilon }\left( X_{\lambda
}^{\varepsilon ^{\prime }}\right) \right\rangle _{2} \\
&\geq &c\left( \lambda \right) \left\vert \Psi _{\lambda }\left( J_{\lambda
}^{\mu,\varepsilon }\left( X_{\lambda }^{\varepsilon }\right) \right) -\Psi
_{\lambda }\left( J_{\lambda }^{\mu,\varepsilon ^{\prime }}\left( X_{\lambda
}^{\varepsilon ^{\prime }}\right) \right) \right\vert _{2}^{2},
\end{eqnarray*}%
one gets
\begin{eqnarray*}
&&\mathbb{E}\left\vert X_{\lambda }^{\varepsilon }-X_{\lambda }^{\varepsilon
^{\prime }}\right\vert _{V^{\prime }}^{2}+\pr{2\mu -\frac{K^2}{\lambda }} \mathbb{E}\int_{0}^{t}\left\vert J_{\lambda }^{\mu, \varepsilon }\left(
X_{\lambda }^{\varepsilon }\right) -J_{\lambda }^{\mu,\varepsilon ^{\prime
}}\left( X_{\lambda }^{\varepsilon ^{\prime }}\right) \right\vert
_{V^{\prime }}^{2}dr \\&&+\lambda \mathbb{E}\int_{0}^{t}\left\vert J_{\lambda }^{\mu,\varepsilon
}\left( X_{\lambda }^{\varepsilon }\right) -J_{\lambda }^{\mu,\varepsilon
^{\prime }}\left( X_{\lambda }^{\varepsilon ^{\prime }}\right) \right\vert
_{2}^{2}dr +2c(\lambda)\mathbb{E}\int_{0}^{t}\left\vert \Psi _{\lambda }\left( J_{\lambda
}^{\mu,\varepsilon }\left( X_{\lambda }^{\varepsilon }\right) \right) -\Psi
_{\lambda }\left( J_{\lambda }^{\mu,\varepsilon ^{\prime }}\left( X_{\lambda
}^{\varepsilon ^{\prime }}\right) \right) \right\vert _{2}^{2}dr \\
&&-4\left( \varepsilon +\varepsilon ^{\prime }\right)\mathbb{E}%
\int_{0}^{t}\left( \left\vert A_{\lambda }^{\mu ,\varepsilon }\left(
X_{\lambda }^{\varepsilon }\right) \right\vert _{V^{\prime }}^{2}+\left\vert
A_{\lambda }^{\mu ,\varepsilon }\left( X_{\lambda }^{\varepsilon ^{\prime
}}\right) \right\vert _{V^{\prime }}^{2}\right) dr \leq C\mathbb{E}\int_{0}^{t}\left\vert X_{\lambda }^{\varepsilon
}-X_{\lambda }^{\varepsilon ^{\prime }}\right\vert _{V^{\prime }}^{2}dr.
\end{eqnarray*}

Finally, by Gronwall's inequality and keeping in mind (\ref{A}),
for $\mu$ sufficiently large (larger than $\frac{K^2}{2\lambda}$), one gets%
\begin{equation}\label{est7}
\begin{split}
&\mathbb{E}\left\vert X_{\lambda }^{\varepsilon }-X_{\lambda }^{\varepsilon
^{\prime }}\right\vert _{V^{\prime }}^{2}+\mathbb{E}\int_{0}^{t}\left\vert J_{\lambda }^{\mu, \varepsilon }\left(
X_{\lambda }^{\varepsilon }\right) -J_{\lambda }^{\mu,\varepsilon ^{\prime
}}\left( X_{\lambda }^{\varepsilon ^{\prime }}\right) \right\vert
_{V^{\prime }}^{2}dr+\mathbb{E}\int_{0}^{t}\left\vert J_{\lambda }^{\mu,\varepsilon
}\left( X_{\lambda }^{\varepsilon }\right) -J_{\lambda }^{\mu,\varepsilon
^{\prime }}\left( X_{\lambda }^{\varepsilon ^{\prime }}\right) \right\vert
_{2}^{2}dr \\&+\mathbb{E}\int_{0}^{t}\left\vert \Psi _{\lambda }\left( J_{\lambda
}^{\mu,\varepsilon }\left( X_{\lambda }^{\varepsilon }\right) \right) -\Psi
_{\lambda }\left( J_{\lambda }^{\mu,\varepsilon ^{\prime }}\left( X_{\lambda
}^{\varepsilon ^{\prime }}\right) \right) \right\vert _{2}^{2}dr \leq C\left( \lambda \right) \left( \varepsilon +\varepsilon ^{\prime
}\right) .  \end{split}
\end{equation}
The proof of the lemma is complete by letting $\varepsilon ,\varepsilon
^{\prime }\rightarrow 0$.
\end{proof}
\bigskip

\section{Appendix}\label{App}
\subsection{Estimates on the eigenfunctions and well-posedness of $\Sigma$}
Let us consider the Laplace operator with Robin boundary conditions on an open
bounded domain $\mathcal{O}\in \mathbb{R}^{d}$ with regular boundary.\ We consider the
eigenfunctions $\left\{ e_{j}\right\} _{j}$ and the eigenvalues $\left\{
\lambda _{j}\right\} _{j}$,\ that is, we look into the following problem%
\begin{equation*}
\left\{ 
\begin{array}{ll}
-\Delta e_{j}=\lambda _{j}e_{j}, &\textnormal{ on } \mathcal{O}, \\ 
\dfrac{\partial e_{j}}{\partial \nu }+\alpha e_{j}=0, &\textnormal{ on } \Gamma _{u}, \\ 
\dfrac{\partial e_{j}}{\partial \nu }=0, &\textnormal{ on } \Gamma _{s}.%
\end{array}%
\right.
\end{equation*}%
where $\partial \mathcal{O}=\Gamma $ is sufficiently smooth, formed by the
disjoint parts $\Gamma _{u}$ and $\Gamma _{s},$ i.e.%
\begin{equation*}
\Gamma =\overline{\Gamma }_{u}\cup \Gamma _{s},\quad \Gamma _{u}\cap \Gamma
_{s}=\emptyset .
\end{equation*}
This problem has a sequence of solutions $(\lambda_j, \varphi_j)_{j \in \mathbb{N}}$ with $\lambda_j \geq 0$ a growing sequence which tends to infinity. This result can be easily deduced using classical results on self-adjoint operators with compact resolvant, see for instance Theorem 3.10, page 53 in \cite{bonnet1995theorie}.

\begin{proposition}
There exist two real constants $\overline{C}>0$
and $\widetilde{C}>0$ such that, for every $x\in L^{2}\left( \mathcal{O}\right) $, 
\begin{equation*}
\left\vert xe_{j}\right\vert _{2}^{2}\leq \widetilde{C}\lambda
_{j}^{\frac{d-1}{2}}\left\vert x\right\vert _{2}^{2},\quad \forall j\in \mathbb{N},
\end{equation*}%
and%
\begin{equation*}
\left\vert xe_{j}\right\vert _{V^{\prime }}^{2}\leq \overline{C}\pr{1+\lambda
_{j}^{\frac{d+1}{2}}}\left\vert x\right\vert _{V^{\prime }}^{2},\quad \forall j\in 
\mathbb{N}.
\end{equation*}
\end{proposition}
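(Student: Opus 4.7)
The plan is to reduce both estimates to two classical pointwise H\"ormander-type bounds on the $L^2(\mathcal{O})$-normalised eigenfunctions of the Robin-Laplace operator, namely
\begin{equation*}
\norm{e_j}_{L^\infty(\mathcal{O})}\leq C\lambda_j^{(d-1)/4},\qquad \norm{\nabla e_j}_{L^\infty(\mathcal{O})}\leq C\lambda_j^{(d+1)/4}.
\end{equation*}
These are the sharp estimates on smooth bounded domains for pure Neumann eigenfunctions, and they extend to the mixed Robin/Neumann setting of \eqref{neumann} by localisation near $\overline{\Gamma_s}\cap \overline{\Gamma_u}$, the Robin term being a bounded zero-order boundary perturbation by the standing assumption on $\alpha$.

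Granting these bounds, the first inequality is immediate: for $x\in L^2(\mathcal{O})$,
\begin{equation*}
\abs{xe_j}_2^2=\int_{\mathcal{O}}\abs{x}^2 e_j^2\,d\xi\leq \norm{e_j}_\infty^2\abs{x}_2^2\leq \widetilde{C}\lambda_j^{(d-1)/2}\abs{x}_2^2.
\end{equation*}

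For the second inequality I would proceed by duality. The $V'$-norm of Subsection \ref{Subsec2.1} is the dual norm associated with the Riesz isomorphism induced by the Robin-Laplace inverse, so for $x\in L^2(\mathcal{O})\subset V'$ one has
\begin{equation*}
\abs{xe_j}_{V'}=\sup_{v\in V,\ \abs{v}_V\leq 1}\abs{\int_{\mathcal{O}}xe_j v\,d\xi}\leq \abs{x}_{V'}\sup_{\abs{v}_V\leq 1}\abs{e_j v}_V,
\end{equation*}
the last step using that multiplication by the smooth function $e_j$ preserves $V=H^1(\mathcal{O})$. A product-rule expansion gives
\begin{equation*}
\abs{e_j v}_V^2\leq 2\norm{\nabla e_j}_\infty^2\abs{v}_2^2+2\norm{e_j}_\infty^2\abs{\nabla v}_2^2+\norm{e_j}_\infty^2\int_{\Gamma_u}\alpha\abs{v}^2\, d\sigma,
\end{equation*}
and combining with the Poincar\'e-type equivalence $\abs{v}_2\leq C\abs{v}_V$ (Theorem 2.7 in the Appendix of \cite{marinoschi}) yields $\abs{e_j v}_V^2\leq C\pr{1+\lambda_j^{(d+1)/2}}\abs{v}_V^2$, which is the announced bound.

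The main obstacle is the sharp gradient estimate $\norm{\nabla e_j}_\infty\leq C\lambda_j^{(d+1)/4}$ uniformly in $j$ under mixed Robin/Neumann conditions: a purely Sobolev-embedding argument based on $\abs{e_j}_{H^s}\leq C\lambda_j^{s/2}$ loses an $\varepsilon$ in the exponent, so one must fall back on the sharper spectral function (H\"ormander) estimates. The smoothness of $\partial\mathcal{O}$ together with the boundedness and continuity of $\alpha$ makes this accessible via a standard partition-of-unity reduction to the flat half-space. Once these ingredients are in place, both bounds of the proposition follow from the elementary duality and product-rule computations outlined above.
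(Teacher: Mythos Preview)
Your proof is correct, and for the first inequality it coincides with the paper's argument. For the second inequality you take a genuinely different route.

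Both you and the paper reduce to bounding the multiplier norm $\abs{e_j\varphi}_V$ for $\varphi\in V$; the difference lies in how this is estimated. You expand $\nabla(e_j\varphi)$ by the product rule, which forces you to control $\norm{\nabla e_j}_\infty$ and hence to invoke the sharper H\"ormander gradient bound $\norm{\nabla e_j}_\infty\leq C\lambda_j^{(d+1)/4}$---as you correctly note, this is the delicate step, since Sobolev embedding alone loses an $\varepsilon$ in the exponent. The paper instead integrates by parts and uses the eigenvalue equation $-\Delta e_j=\lambda_j e_j$ together with the Robin/Neumann boundary conditions to obtain the exact identity
\[
\abs{e_j\varphi}_V^2=\int_{\mathcal{O}}\pr{\lambda_j e_j^2\varphi^2+e_j^2\abs{\nabla\varphi}^2}d\xi,
\]
in which all boundary terms and all occurrences of $\nabla e_j$ cancel. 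This yields the factor $1+\lambda_j^{(d+1)/2}$ using only the function bound $\norm{e_j}_\infty\leq C\lambda_j^{(d-1)/4}$, the extra power of $\lambda_j$ coming from the eigenvalue rather than from the gradient. So your argument is conceptually more direct (pure duality plus Leibniz), while the paper's is more economical in its hypotheses: it needs only the easier of the two spectral bounds, sidestepping the gradient estimate you flag as the main obstacle.
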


\begin{proof}
To prove the first assertion, one notes that
\begin{equation*}
\left\vert xe_{j}\right\vert _{2}\leq \left\vert e_{j}\right\vert _{\infty
}\left\vert x\right\vert _{2}\leq C\lambda _{j}^{\frac{d-1}{4}}\left\vert
x\right\vert _{2},\quad \forall j\in \mathbb{N}\text{.}
\end{equation*}%
Indeed, in the spirit of of \cite[Theorem 1]{66}, we have that $\left\vert e_{j}\right\vert _{\infty
}\leq \widetilde{C}\lambda _{j}^{\frac{d-1}{4}}$ for all $k\in \mathbb{N}$.
The reader is invited to note that we consider the eigenvalues to be $\lambda_j$, while \cite{66} deals with $\lambda_j^2$ as eigenvalue. The constant $C$ is generic and depends on the domain $\mathcal{O}$, but not on $j$.\\
In order to prove the second inequality, we consider the functional
framework which was introduced at the beginning of the paper. One has 
\begin{equation*}
\left\vert xe_{j}\right\vert _{V^{\prime }}^{2}=\left\langle
xe_{j},xe_{j}\right\rangle _{V^{\prime }}=(xe_{j})\left( \varphi \right),
\end{equation*}%
where $\varphi $\ is the solution of%
\begin{equation*}
-\Delta \varphi =e_{j}x, \textnormal{ on } \mathcal{O},\ 
\dfrac{\partial \varphi }{\partial \nu }+\alpha \varphi =0, \textnormal{ on } \Gamma _{u}, \ 
\dfrac{\partial \varphi }{\partial \nu }=0, \textnormal{ on } \Gamma _{s}.%
\end{equation*}

Since $x\in L^{2}\left( \mathcal{O}\right), $ it follows that
\begin{equation}
\abs{(e_{j}x)\left( \varphi \right) }=\abs{\left\langle e_{j}x,\varphi \right\rangle
_{2}}=\abs{\int\limits_{\mathcal{O}}e_{j}\left( \xi \right) x\left( \xi \right)
\varphi \left( \xi \right) d\xi }\leq \left\vert x\right\vert _{V^{\prime
}}\left\vert e_{j}\varphi \right\vert _{V}  \label{star}
\end{equation}%
by the Gelfand triple $V\subset L^{2}\left( \mathcal{O}\right) \subset
V^{\prime }$.
One computes
\begin{eqnarray*}
\left\vert e_{j}\varphi \right\vert _{V}^{2} &=&\int\limits_{\mathcal{O}%
}\left\vert \nabla \left( e_{j}\varphi \right) \right\vert ^{2}d\xi
+\int\limits_{\Gamma_u}\alpha \left\vert e_{j}\varphi
\right\vert ^{2}d\sigma \\
&=&-\int\limits_{\mathcal{O}}\Delta \left( e_{j}\varphi \right) \left(
e_{j}\varphi \right) d\xi +\int\limits_{\partial \mathcal{O}}\frac{\partial
\left( e_{j}\varphi \right) }{\partial \nu }\left( e_{j}\varphi \right)
d\sigma +\int\limits_{{\Gamma_u}}\alpha \left\vert e_{j}\varphi
\right\vert ^{2}d\sigma .
\end{eqnarray*}
The reader is invited to note that, by a slight abuse of notations, we have dropped the $Tr$ operator, but it should still be kept on all the elements integrated w.r.t $d\sigma$. For the first term , one gets%
\begin{eqnarray*}
-\int\limits_{\mathcal{O}}\Delta \left( e_{j}\varphi \right) \left(
e_{j}\varphi \right) d\xi
&=&-\int\limits_{\mathcal{O}}\left( e_{j}\varphi ^{2}\Delta
e_{j}+e_{j}^{2}\varphi \Delta \varphi +\frac{1}{2}\nabla \left(
e_{j}^{2}\right)\ \cdot \ \nabla \left( \varphi ^{2}\right) \right) d\xi \\
&=&-\int\limits_{\mathcal{O}}\left( e_{j}\varphi ^{2}\Delta
e_{j}+e_{j}^{2}\varphi \Delta \varphi \right) d\xi +\frac{1}{2}\int\limits_{%
\mathcal{O}}e_{j}^{2}\Delta \left( \varphi ^{2}\right) d\xi -\frac{1}{2}%
\int\limits_{\partial \mathcal{O}}e_{j}^{2}\frac{\partial \varphi ^{2}}{\partial
\nu }d\sigma .
\end{eqnarray*}
Going back to the initial expression yields
\begin{eqnarray*}
\left\vert e_{j}\varphi \right\vert _{V}^{2} &=&-\int\limits_{\mathcal{O}%
}\left( e_{j}\varphi ^{2}\Delta e_{j}+e_{j}^{2}\varphi \Delta \varphi
\right) d\xi +\frac{1}{2}\int\limits_{\mathcal{O}}e_{j}^{2}\Delta \left(
\varphi ^{2}\right) d\xi \\
&&-\frac{1}{2}\int\limits_{\partial \mathcal{O}}e_{j}^{2}\frac{\partial \varphi ^{2}%
}{\partial \nu }d\sigma +\int\limits_{\partial \mathcal{O}}\frac{%
\partial \left( e_{j}\varphi \right) }{\partial \nu }\left( e_{j}\varphi
\right) d\sigma +\int\limits_{\Gamma_u}\alpha \left\vert
e_{j}\varphi \right\vert ^{2}d\sigma \\
&=&\int\limits_{\mathcal{O}}\lambda _{j}e_{j}^{2}\varphi ^{2}d\xi
-\int\limits_{\mathcal{O}}e_{j}^{2}\varphi \Delta \varphi d\xi +\frac{1}{2}%
\int\limits_{\mathcal{O}}e_{j}^{2}\left( 2\varphi \Delta \varphi +2\abs{
\nabla \varphi } ^{2}\right) d\xi \\
&&-\frac{1}{2}\int\limits_{\mathcal{O}}e_{j}^{2}\frac{\partial \varphi ^{2}%
}{\partial \nu }d\sigma +\int\limits_{\partial \mathcal{O}}\frac{%
\partial \left( e_{j}\varphi \right) }{\partial \nu }\left( e_{j}\varphi
\right) d\sigma +\int\limits_{{\Gamma_u}}\alpha \left\vert
e_{j}\varphi \right\vert ^{2}d\sigma \\
&=&\int\limits_{\mathcal{O}}\left( \lambda _{j}e_{j}^{2}\varphi ^{2}d\xi
+e_{j}^{2}\abs{\nabla \varphi} ^{2}\right) d\xi -\frac{1}{2}%
\int\limits_{\partial \mathcal{O}}e_{j}^{2}\frac{\partial \varphi ^{2}}{\partial
\nu }d\sigma \\
&&+\int\limits_{\partial \mathcal{O}}\frac{\partial \left( e_{j}\varphi
\right) }{\partial \nu }\left( e_{j}\varphi \right) d\sigma
+\int\limits_{\Gamma _{u}}\alpha \left\vert e_{j}\varphi \right\vert
^{2}d\sigma {=:}I+B_{1}+B_{2}+B_{3}.
\end{eqnarray*}
Let us write%
\begin{eqnarray*}
B_{2} &=&\int\limits_{\partial \mathcal{O}}\frac{\partial \left(
e_{j}\varphi \right) }{\partial \nu }\left( e_{j}\varphi \right) d\sigma
=\int\limits_{\partial \mathcal{O}}\frac{1}{2}\frac{\partial \left(
e_{j}^{2}\varphi ^{2}\right) }{\partial \nu }d\sigma
=\int\limits_{\partial \mathcal{O}}\frac{1}{2}\left( \frac{\partial
\left( e_{j}^{2}\right) }{\partial \nu }\varphi ^{2}+\frac{\partial \left(
\varphi ^{2}\right) }{\partial \nu }e_{j}^{2}\right) d\sigma .
\end{eqnarray*}
Since 
\begin{equation*}
\frac{1}{2}\frac{\partial \left( e_{j}^{2}\right) }{\partial \nu }=2e_{j}%
\frac{\partial e_{j}}{\partial \nu }=\left\{ 
\begin{array}{ll}
0, & \text{on }\Gamma _{s}, \\ 
-\alpha e_{j}^{2}, & \text{on }\Gamma _{u},%
\end{array}%
\right.
\end{equation*}%
it follows that%
\begin{equation*}
B_{2}=-\int\limits_{\partial \mathcal{O}}\alpha \left\vert e_{j}\varphi
\right\vert ^{2}d\sigma +\frac{1}{2}\int\limits_{\partial \mathcal{O}}\frac{%
\partial \left( \varphi ^{2}\right) }{\partial \nu }e_{j}^{2}d\sigma .
\end{equation*}
Replacing in the previous relation yields%
\begin{equation*}
\left\vert e_{j}\varphi \right\vert _{V}^{2}=\int\limits_{\mathcal{O}%
}\left( \lambda _{j}e_{j}^{2}\varphi ^{2}d\xi +e_{j}^{2}\abs{ \nabla
\varphi} ^{2}\right) d\xi.
\end{equation*}%
Then%
\begin{equation*}
\left\vert e_{j}\varphi \right\vert _{V}^{2}\leq C\lambda
_{j}^{\frac{d+1}{2}}\int\limits_{\mathcal{O}}\varphi ^{2}d\xi +C\lambda
_{j}^{\frac{d-1}{2}}\int\limits_{\mathcal{O}}\left\vert \nabla \varphi \right\vert
^{2}d\xi \leq C\pr{1+\lambda _{j}^{\frac{d+1}{2}}}\left\vert \varphi \right\vert _{V}^{2}.
\end{equation*}
As a consequence,
\begin{equation*}
\left\vert \varphi \right\vert _{V}^{2}=\left\langle e_{j}x,\varphi
\right\rangle _{2}\leq \left\vert x\right\vert _{V^{\prime }}\left\vert
e_j\varphi \right\vert _{V}\leq C\sqrt{\pr{1+\lambda _{j}^{\frac{d+1}{2}}}}\left\vert \varphi \right\vert _{V}\abs{x}_{V'}.
\end{equation*}%
Going back to (\ref{star}), one gets, by combining the last two inequalities, the remaining assertion of our proposition.
\end{proof}

\subsection{Proofof Lemma \ref{lema2}}
\begin{proof}[Proof of Lemma \ref{lema2}]
We write the weak form of the difference%
\begin{equation*}
J_{\lambda }^{\mu, \varepsilon }\left( y\right) -J_{\lambda }^{\mu,\varepsilon
}\left( \overline{y}\right) +\varepsilon A_{\lambda }^{\mu }\left(
J_{\lambda }^{\mu, \varepsilon }\left( y\right) \right) -\varepsilon A_{\lambda
}^{\mu }\left( J_{\lambda }^{\mu, \varepsilon }\left( \overline{y}\right) \right)
=y-\overline{y},\quad in\text{ }V^{\prime },
\end{equation*}%
as%
\begin{equation*}
_{V^{\prime }}\left\langle J_{\lambda }^{\mu, \varepsilon }\left( y\right)
-J_{\lambda }^{\mu, \varepsilon }\left( \overline{y}\right) ,\zeta \right\rangle
_{V}+\varepsilon _{V^{\prime }}\left\langle A_{\lambda }^{\mu }\left(
J_{\lambda }^{\mu, \varepsilon }\left( y\right) \right) -A_{\lambda }^{\mu
}\left( J_{\lambda }^{\mu, \varepsilon }\left( \overline{y}\right) \right) ,\zeta
\right\rangle _{V}=_{V^{\prime }}\left\langle y-\overline{y},,\zeta
\right\rangle _{V},
\end{equation*}%
for $\zeta \in V$ and we take the particular element $\zeta =\widetilde{\Psi }_{\lambda
}\left( J_{\lambda }^{\mu, \varepsilon }\left( y\right) \right) -\widetilde{\Psi }%
_{\lambda }\left( J_{\lambda }^{\mu, \varepsilon }\left( \overline{y}\right)
\right) \in V$ to get

\begin{eqnarray*}
&&_{V^{\prime }}\left\langle J_{\lambda }^{\mu, \varepsilon }\left( y\right)
-J_{\lambda }^{\mu, \varepsilon }\left( \overline{y}\right) ,\widetilde{\Psi }%
_{\lambda }\left( J_{\lambda }^{\mu, \varepsilon }\left( y\right) \right) -%
\widetilde{\Psi }_{\lambda }\left( J_{\lambda }^{\mu, \varepsilon }\left( 
\overline{y}\right) \right) \right\rangle _{V} \\
&&+\varepsilon \mu \int_{\mathcal{O}}\left( J_{\lambda }^{\mu,\varepsilon
}\left( y\right) -J_{\lambda }^{\mu, \varepsilon }\left( \overline{y}\right)
\right) \left( \widetilde{\Psi }_{\lambda }\left( J_{\lambda }^{\mu,\varepsilon
}\left( y\right) \right) -\widetilde{\Psi }_{\lambda }\left( J_{\lambda
}^{\mu, \varepsilon }\left( \overline{y}\right) \right) \right) d\xi \\
&&+\varepsilon \int_{\mathcal{O}}\left| \nabla \widetilde{\Psi }_{\lambda
}\left( J_{\lambda }^{\mu, \varepsilon }\left( y\right) \right) -\nabla 
\widetilde{\Psi }_{\lambda }\left( J_{\lambda }^{\mu, \varepsilon }\left( 
\overline{y}\right) \right) \right| ^{2}d\xi \\
&&-\varepsilon \int_{\mathcal{O}}K\left( J_{\lambda }^{\mu, \varepsilon }\left(
y\right) -J_{\lambda }^{\mu, \varepsilon }\left( \overline{y}\right) \right)
\left( \nabla \widetilde{\Psi }_{\lambda }\left( J_{\lambda }^{\mu,\varepsilon
}\left( y\right) \right) -\nabla \widetilde{\Psi }_{\lambda }\left(
J_{\lambda }^{\mu, \varepsilon }\left( \overline{y}\right) \right) \right)\cdot\ i_3 d\xi \\
&&+\varepsilon \int_{\Gamma _{u}}\alpha Tr \left( \widetilde{\Psi }_{\lambda
}\left( J_{\lambda }^{\mu, \varepsilon }\left( y\right) \right) -\widetilde{\Psi }%
_{\lambda }\left( J_{\lambda }^{\mu, \varepsilon }\left( \overline{y}\right)
\right) \right) ^{2}d\sigma=\int_{\mathcal{O}}\left( y-\overline{y}\right) \left( \widetilde{\Psi }%
_{\lambda }\left( J_{\lambda }^{\mu, \varepsilon }\left( y\right) \right) -%
\widetilde{\Psi }_{\lambda }\left( J_{\lambda }^{\mu, \varepsilon }\left( 
\overline{y}\right) \right) \right) d\xi .
\end{eqnarray*}

From the $\lambda$-strong monotonicity of $\widetilde{\Psi }_{\lambda }$, we have that%
\begin{eqnarray*}
&&\left( \lambda +\varepsilon \mu \lambda \right) \int_{\mathcal{O}}\left(
J_{\lambda }^{\mu, \varepsilon }\left( y\right) -J_{\lambda }^{\mu,\varepsilon
}\left( \overline{y}\right) \right) ^{2}d\xi +\varepsilon \int_{\mathcal{O}}\left| \nabla \widetilde{\Psi }_{\lambda
}\left( J_{\lambda }^{\mu, \varepsilon }\left( y\right) \right) -\nabla 
\widetilde{\Psi }_{\lambda }\left( J_{\lambda }^{\mu, \varepsilon }\left( 
\overline{y}\right) \right) \right| ^{2}d\xi \\
&&-\varepsilon K\int_{\mathcal{O}}\left( J_{\lambda }^{\mu, \varepsilon }\left(
y\right) -J_{\lambda }^{\mu, \varepsilon }\left( \overline{y}\right) \right)
\left( \nabla \widetilde{\Psi }_{\lambda }\left( J_{\lambda }^{\mu,\varepsilon
}\left( y\right) \right) -\nabla \widetilde{\Psi }_{\lambda }\left(
J_{\lambda }^{\mu, \varepsilon }\left( \overline{y}\right) \right) \right)\cdot\ i_3 d\xi \\
&&+\varepsilon \int_{\Gamma _{u}}\alpha Tr \left( \widetilde{\Psi }_{\lambda
}\left( J_{\lambda }^{\mu, \varepsilon }\left( y\right) \right) -\widetilde{\Psi }%
_{\lambda }\left( J_{\lambda }^{\mu, \varepsilon }\left( \overline{y}\right)
\right) \right) ^{2}d\sigma \leq\int_{\mathcal{O}}\left( y-\overline{y}\right) \left( \widetilde{\Psi 
}_{\lambda }\left( J_{\lambda }^{\mu, \varepsilon }\left( y\right) \right) -%
\widetilde{\Psi }_{\lambda }\left( J_{\lambda }^{\mu, \varepsilon }\left( 
\overline{y}\right) \right) \right) d\xi .
\end{eqnarray*}
\noindent Elementary computations yield%
\begin{eqnarray*}
&&\left( \lambda +\varepsilon \mu \lambda -\varepsilon K^{2}\right) \int_{%
\mathcal{O}}\left( J_{\lambda }^{\mu, \varepsilon }\left( y\right) -J_{\lambda
}^{\mu, \varepsilon }\left( \overline{y}\right) \right) ^{2}d\xi +\varepsilon \int_{\Gamma _{u}}\alpha Tr \left( \widetilde{\Psi }_{\lambda
}\left( J_{\lambda }^{\mu, \varepsilon }\left( y\right) \right) -\widetilde{\Psi }%
_{\lambda }\left( J_{\lambda }^{\mu, \varepsilon }\left( \overline{y}\right)
\right) \right) ^{2}d\sigma \\
&\leq &\int_{\mathcal{O}}\left( y-\overline{y}\right) \left( \widetilde{\Psi 
}_{\lambda }\left( J_{\lambda }^{\mu, \varepsilon }\left( y\right) \right) -%
\widetilde{\Psi }_{\lambda }\left( J_{\lambda }^{\mu, \varepsilon }\left( 
\overline{y}\right) \right) \right) d\xi\\&\leq&\frac{1}{2\varepsilon }\int_{\mathcal{O}}\left( y-\overline{y}\right)
^{2}d\xi +\frac{\varepsilon}{2\lambda^2 }\int_{%
\mathcal{O}}\left( J_{\lambda }^{\mu, \varepsilon }\left( y\right) -J_{\lambda
}^{\mu,\varepsilon }\left( \overline{y}\right) \right) ^{2}d\xi .
\end{eqnarray*}%
and, therefore,
\begin{equation*}
\left( \lambda +\varepsilon \mu \lambda -\varepsilon K^{2}-\frac{\varepsilon}{2\lambda^2 }\right) \int_{\mathcal{O}}\left(
J_{\lambda }^{\mu, \varepsilon }\left( y\right) -J_{\lambda }^{\mu,\varepsilon
}\left( \overline{y}\right) \right) ^{2}d\xi \leq \frac{1}{2}\int_{\mathcal{O%
}}\left( y-\overline{y}\right) ^{2}d\xi .
\end{equation*}%
For $\mu$ sufficiently large e.g., larger than $\frac{K^2+1}{\lambda^3}$, we have the Lipschitz continuity of $%
J_{\lambda }^{\mu, \varepsilon }$ for every $0<\lambda,\varepsilon<1$. Note that, alternatively, the conclusion can be obtained with a fixed $\mu$ by requiring $\varepsilon$ to be small enough.
\end{proof}

\section*{Acknowledgement}
I.C. was supported by the DEFHY3GEO project.\\
    
\noindent D.G. and J.L. acknowledge financial support from the National Key R and D Program of China (No. 2018YFA0703900), the NSF of Shandong Province (No. ZR202306020015), the NSF of P.R.China (Nos. 12031009, 11871037).\\

\noindent I.C and A.T would like to thank the Shandong University where part of this work was developed.

\end{document}